\documentclass[11pt,a4paper]{amsart}
\usepackage[utf8x]{inputenc} 
\usepackage{latexsym}
\usepackage{color,graphicx,shortvrb}
\usepackage{amsmath, amssymb}
\usepackage{amsfonts}
\usepackage[colorlinks, bookmarks=true]{hyperref}
\newtheorem{theorem}{Theorem}[section]
\newtheorem{lemma}[theorem]{Lemma}
\newtheorem{proposition}[theorem]{Proposition}
\newtheorem{corollary}[theorem]{Corollary}

\theoremstyle{definition}
\newtheorem{definition}[theorem]{Definition}

\title{Polynomial maps on the monoid of words}
\author[J. M. Almira]{Jose María Almira}
\address{J. M. Almira, Depto. Ingenier\'{\i}a y Tecnolog\'{\i}a de Computadores, Universidad de Murcia, 30100 Murcia, SPAIN}
\email{jmalmira@um.es}

\begin{document}
\keywords{Functional equations on groups, Polynomial maps, Difference operators}
\subjclass[2020]{39B05, 39B52, 39A70}
\begin{abstract}
 We briefly visit the theory of polynomial and semipolynomial maps defined on an arbitrary monoid, with range a commutative group. Then we characterize the space $\mathcal{P}(S,\mathbb{C})$ of polynomial maps $f:S\to \mathbb{C}$, where $S=\mathcal{A}^*$ is the monoid of words based on an arbitrary alphabet $\mathcal{A}$ under concatenation, and we prove that if there exists a monoid $S$ such that $\mathcal{SP}(S,\mathbb{C})\neq \mathcal{P}(S,\mathbb{C})$, then also $\mathcal{SP}(\mathcal{A}^*,\mathbb{C})\neq\mathcal{P}(\mathcal{A}^*,\mathbb{C})$ for a certain alphabet $\mathcal{A}$.  We propose as an open problem to prove or disprove that  $\mathcal{SP}(\mathcal{A}^*,\mathbb{C})=\mathcal{P}(\mathcal{A}^*,\mathbb{C})$ for arbitrary alphabets $\mathcal{A}$. Finally, we apply our algebraic framework to combinatorial objects by proving that permutation pattern functions are polynomial maps. 
  Our results are motivated by previous work of Shulman \cite{Shulman_2025}, \cite{Shulman_2019}. 

\end{abstract}
\maketitle
\markboth{J. M. Almira}{Polynomial maps}



\section{Polynomial maps with range a commutative group}

Throughout  this paper, we assume that $(S,\cdot,e)$ is a monoid and $(G,+,0)$ is a commutative group, and consider the operators:
\begin{align*}
L_{s}:G^S\to G^S; & \quad L_s(f)(t)=f(st)-f(t),\\
R_{s}:G^S\to G^S; & \quad R_s(f)(t)=f(ts)-f(t),
\end{align*}
which are called the left and right difference operators, respectively. In this noncommutative setting, polynomial maps are defined  as follows:  The constant functions $f: S\to G$ are polynomial maps. 
If the constant is equal to $0$, then its functional degree is $-\infty$. 
If the constant is different from $0$, then its functional degree is $0$. 
Inductively, for each $n\ge 1$, the function $f:S\to G$ is a polynomial map of functional degree $\leq n$ if and only if for each $s\in S$, the functions $R_s(f), L_s(f)$ are both polynomial maps of functional degree $\leq n-1$. 
The minimal $n$  with this property is called the \emph{functional degree} of $f$.  If we only use the operator $L_s$, we speak of left-polynomial maps and left-functional degree; if we only use the operator $R_s$, we speak of right-polynomial maps and right-functional degree.  

We denote by $\mathcal{P}_n(S,G)$ the set of all polynomial maps $f\in G^S$ with functional degree $\leq n$.   
For left-polynomials maps of left-functional degree $\leq n$, we use the notation $\mathcal{P}_n^{L}(S,G)$, while for right-polynomial maps of right-functional degree $\leq n$, we use the notation $\mathcal{P}_n^R(S,G)$. Obviously, we have 
\[
\mathcal{P}_n(S,G)\subseteq
\mathcal{P}_n^L(S,G) \cap 
\mathcal{P}_n^R(S,G). 
\]
\subsection{Cauchy $n$-balanced and Aichinger's functional equations } 
In \cite{Shulman_2025}, it was essentially proved that (see also \cite[Chapter 6]{AlmiraHu}) 
\begin{equation}\label{allpolequal}
\mathcal{P}_n(S,G)
=\mathcal{P}_n^L(S,G)
=\mathcal{P}_n^R(S,G)
\end{equation}
so that the distinction between left-, right- and just polynomial maps is unnecessary when $G$ is commutative. We include a draft of the proof, for the sake of completeness. To prove the result, it is necessary to introduce the Cauchy $n$-balanced functional equation:
\begin{equation}\label{balanced}
\sum_{T\subseteq\{1,\dots,n\}} (-1)^{n-|T|}  f(\prod_{i\in T} x_i)=0 \quad \text{ {\rm for all }} x_1,\dots,x_n\in S,
\end{equation}
(where, for each subset $T\subseteq \{1,\dots,n\}$, the factors in the product $\prod_{i\in T} x_i$ appear in the natural increasing order of the indices, and for the empty set $T=\emptyset$ we put $\prod_{i\in \emptyset} x_i=e$)
and prove that $f\in G^S$ is a right-polynomial map of right-functional degree $<n$ if and only if it solves \eqref{balanced}, and the same happens with left-polynomial maps of left-functional degree $<n$, so that  $\mathcal{P}_n(S,G)\subseteq  \mathcal{P}_n^L(S,G)=\mathcal{P}_n^R(S,G)$.

Equation \eqref{balanced} appears quite naturally, indeed: for functions $f\in G^S$, we have that  $R_h=(\tau_h^R-I)$ and $L_h=(\tau_h^L-I)$, where $\tau_h^Rf(x)=f(xh)$, $\tau_h^Lf(x)=f(hx)$, so that 
\[
R_{x_1}\cdots R_{x_n} f = \prod_{i=1}^n (\tau^R_{x_i}-I) f = \sum_{T\subseteq\{1,\dots,n\}} (-1)^{n-|T|} \tau^R_{\prod_{i\in T} x_i} f,
\]
and 
\[
L_{x_1}\cdots L_{x_n} f = \prod_{i=1}^n (\tau^L_{x_i}-I) f = \sum_{T\subseteq\{1,\dots,n\}} (-1)^{n-|T|} \tau^L_{\prod_{i\in T} x_i} f.
\]
Thus, $f$ is a right-polynomial map of right-functional degree $<n$ if and only if
\[
\sum_{T\subseteq\{1,\dots,n\}} (-1)^{n-|T|}  f(x\prod_{i\in T} x_i)=0 \quad \text{ {\rm for all }} x_1,\dots,x_n,x\in S 
\]
and, evaluating $e\in S$ (the unit of $S$), we get \eqref{balanced}.  Moreover, if $R_{x_1}\cdots R_{x_n}f(e)=0$ for all $x_1,\dots,x_n\in S$, then 
\begin{eqnarray*}
   0 &=& R_{x_1}R_{x_2}\cdots R_{x_n}f(e) = R_{x_1}(R_{x_2}\cdots R_{x_n}f)(e)\\
     &=& R_{x_2}\cdots R_{x_n}f(x_1)-R_{x_2}\cdots R_{x_n}f(e) \quad \text{{\rm for all }} x_1\in S
\end{eqnarray*} 
which means that  $R_{x_2}\cdots R_{x_n}f$ is a constant function and, henceforth, $R_{x_1}R_{x_2}\cdots R_{x_n}f$ vanishes identically. Thus, $f$ is a right-polynomial map of right-functional degree $<n$ if and only if \eqref{balanced} holds. The same argument, applied to left-polynomial maps yields that $f\in G^S$ is a left-polynomial map of left-functional degree $<n$ if and only if 
\[
\sum_{T\subseteq\{1,\dots,n\}} (-1)^{n-|T|}  f(\left(\prod_{i\in T} x_i\right)x)=0 \quad \text{ {\rm for all }} x_1,\dots,x_n,x\in S.
\]
and, taking $x=e$, we get again the functional equation \eqref{balanced} and we can repeat the same argument with left-polynomial maps.  Thus, the sets $\mathcal{P}_n^L(S,G)$ and $\mathcal{P}_n^R(S,G)$ coincide, and they define precisely the solutions set of Cauchy $(n+1)$-balanced functional equation.

On the other hand, the commutativity of $G$ implies that, for all $s_1,s_2\in S$,  $L_{s_1}R_{s_2}=R_{s_2}L_{s_1}$ (it is a direct computation). Consequently, if $\{s_1,\dots,s_n\}\subset S$ and $D_i\in\{R_{s_i},L_{s_i}\}$ for $i=1,\dots,n$, then 
    \[
    D_1D_2\cdots D_n=L_{s_{i_1}}L_{s_{i_2}}\cdots L_{s_{i_k}}R_{s_{j_1}}R_{s_{j_2}}\cdots R_{s_{j_{n-k}}} 
    \]
    for a certain choice of indices $i_\ell$, $j_t$ and $k$ such that: $$\{i_1,\dots,i_k,j_1,\dots,j_{n-k}\}=\{1,2,\dots,n\}.$$
  Thus, the function $f:S\to G$ is a polynomial map of functional degree $<n$ if and only if $f$ solves, for each $k\in \{0,\dots,n\}$,  the system of equations: 
     \begin{equation}\label{defpol}
         L_{s_{1}}L_{s_{2}}\cdots L_{s_{k}}R_{h_{1}}R_{h_{2}}\cdots R_{h_{n-k}} f(x)=0 \quad s_1,\dots,s_k,h_1,\dots,h_{n-k},x\in S.
     \end{equation}
Then, using that $\mathcal{P}_m^L(S,G)=\mathcal{P}_m^R(S,G)$ for all $m\in\mathbb{N}$, it is possible to prove that, in fact, $f\in \mathcal{P}_{n-1}(S,G)$ as soon as it solves \eqref{defpol} for at least one $k\in\{0,\dots,n\}$. Indeed,  let us assume that $f$ solves \eqref{defpol} for a given $k\in \{0,1,\dots,n\}$. Then $R_{h_{1}}R_{h_{2}}\cdots R_{h_{n-k}} f\in \mathcal{P}_k^L(S,G)=\mathcal{P}_k^R(S,G)$ for all $h_1,\dots,h_{n-k}\in S$. Hence 
    \[
    R_{s_{1}}R_{s_{2}}\cdots R_{s_{k}}R_{h_{1}}R_{h_{2}}\cdots R_{h_{n-k}} f(x)=0 \quad s_1,\dots,s_k,h_1,\dots,h_{n-k},x\in S
    \]
    and $f\in \mathcal{P}_{n-1}^R(S,G)= \mathcal{P}_{n-1}^L(S,G)$. In particular, $f$ solves the equation \eqref{defpol} for $0,k$ and $n$.  Let us now select $k^*\in\{1,\dots,n-1\}$, $k^*\neq k$. Then $f\in \mathcal{P}_{n-1}^R(S,G)$ implies that 
    \[
R_{s_{1}}R_{s_{2}}\cdots R_{s_{k^*}}R_{h_{1}}R_{h_{2}}\cdots R_{h_{n-k^*}} f(x)=0 \quad s_1,\dots,s_{k^*},h_1,\dots,h_{n-k^*},x\in S
    \]
    Hence $R_{h_{1}}R_{h_{2}}\cdots R_{h_{n-k^*}} f\in \mathcal{P}_{k^*}^R(S,G)=\mathcal{P}_{k^*}^L(S,G)$, and $f$ solves the equation 
    \[
    L_{s_{1}}L_{s_{2}}\cdots L_{s_{k^*}}R_{h_{1}}R_{h_{2}}\cdots R_{h_{n-k^*}} f(x)=0 \quad s_1,\dots,s_{k^*},h_1,\dots,h_{n-k^*},x\in S.
    \]
   Thus, $f$ solves \eqref{defpol} for all $k\in\{0,\dots,n\}$, which means that  $f\in \mathcal{P}_{n-1}(S,G)$. In particular, the identity \eqref{allpolequal} holds true for all $n$. 

Another important characterization of the elements of $\mathcal{P}_{n-1}(S,G)$ is as follows: $\operatorname{fdeg}(f)<n$ if and only if $f$ solves Aichinger's equation:
\begin{equation}
        \label{61_Aichinger} f(x_1x_2\cdots x_n)=\sum_{i=1}^nF_i(x_1,\dots,\widehat{x_i},\dots,x_n) \quad x_1,\dots,x_n\in S.
    \end{equation}
Indeed, if $\operatorname{fdeg}(f)<n$, then $f$ solves the equation \eqref{balanced}, that can be rearranged as: 
\begin{equation}\label{balanced2}
f(x_1x_2\cdots x_{n})=\sum_{T\subsetneq\{1,\dots,n\}} (-1)^{n+1-|T|}  f(\prod_{k\in T} x_k) = \sum_{i=1}^n\left(\sum_{T\subsetneq\{1,\dots,n\},i\not\in T} (-1)^{n+1-|T|}  f(\prod_{k\in T} x_k)\right) 
\end{equation}
which means that $f$ also solves \eqref{61_Aichinger} with 
\[
F_i(x_1,\dots,\widehat{x_i},\dots,x_n)= \sum_{T\subsetneq\{1,\dots,n\},i\not\in T} (-1)^{n+1-|T|}  f(\prod_{k\in T} x_k). 
\]
The fact that solutions of \eqref{61_Aichinger} are polynomials of functional degree $<n$ is a particular case of \cite[Theorem 2.3]{Shulman_2025}. Moreover, the same result can be proved just adapting the arguments of \cite[Theorem 2.1]{Almira_2023} to this noncommutative setting. 

\subsection{Functional degree of the pointwise product} \label{pointwiseproduct}
Both $n$-balanced Cauchy and Aichinger's functional equations are useful to prove many results about polynomial maps. In particular, we use them to give two distinct proofs of the inequality
\begin{equation}\label{fdegproduct}
    \operatorname{fdeg}(f\cdot g)\leq \operatorname{fdeg}(f)+\operatorname{fdeg}(g) \quad \text{ whenever } f,g\in \mathfrak{R}^S,
\end{equation}
where $(\mathfrak{R},+,\cdot)$ is any commutative ring, and $(f\cdot g)(x)=f(x)\cdot g(x)$ is the pointwise product of $f,g:(S,\cdot)\to (\mathfrak{R},+)$.

\begin{lemma}\label{lem:6.combinatorial_split_proof}
Let $S$ be a monoid and $\mathfrak{R}$ a commutative ring. For any functions $f, g \in \mathfrak{R}^S$ and a finite index set $\underline{n} = \{1, \dots, n\}$, the following identity holds:
\begin{align}
\sum_{T \subseteq \underline{n}} (-1)^{n-|T|} f\left(\prod_{i\in T} x_i\right) g\left(\prod_{i\in T} x_i\right) &= \nonumber \\
\sum_{\substack{A \cup B = \underline{n}}} \left( \sum_{A' \subseteq A} (-1)^{|A|-|A'|} f\left(\prod_{i \in A' } x_i\right) \right) &\cdot \left( \sum_{B' \subseteq B} (-1)^{|B|-|B'|} g\left(\prod_{j \in  B'} x_j\right) \right), \label{eq:combinatorial_split_explicit}
\end{align}
where $A,B\subseteq \underline{n}$ satisfy $A \cup B = \underline{n}$, and the empty product $\prod_{i \in \emptyset} x_i$ is defined to be $e \in S$.
\end{lemma}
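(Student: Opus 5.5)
Write $x_T=\prod_{i\in T}x_i$ (factors in increasing order of indices, $x_\emptyset=e$). The plan is to expand the right-hand side of \eqref{eq:combinatorial_split_explicit} completely and compare coefficients. For each fixed pair of subsets $A',B'\subseteq\underline{n}$, we collect the coefficient of the product $f(x_{A'})\,g(x_{B'})$. This pair appears in the term indexed by $(A,B)$ exactly when $A'\subseteq A$, $B'\subseteq B$ and $A\cup B=\underline{n}$, and there it carries the sign $(-1)^{|A|-|A'|+|B|-|B'|}$. Thus the right-hand side equals
\[
\sum_{A',B'\subseteq \underline{n}} c(A',B')\, f(x_{A'})\,g(x_{B'}),\qquad c(A',B')=\sum_{\substack{A\supseteq A',\ B\supseteq B'\\ A\cup B=\underline{n}}}(-1)^{|A|-|A'|+|B|-|B'|}.
\]
We deliberately treat these as formal coefficients and never use any relation among the values $f(x_T)$. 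That is legitimate because we only need equality of the two sums, and every summand on the left is itself of the form $f(x_T)g(x_T)$.

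The key step is to compute $c(A',B')$. The constraints factor coordinatewise over $i\in\underline{n}$. For each $i$ we choose whether $i\in A$ and whether $i\in B$, subject to three conditions: $i\in A$ is forced if $i\in A'$, $i\in B$ is forced if $i\in B'$, and at least one of the two holds. The sign also factors, as $\prod_i(-1)^{[i\in A]-[i\in A']+[i\in B]-[i\in B']}$. Therefore $c(A',B')=\prod_{i=1}^n c_i$, where $c_i$ is a local sum that depends only on whether $i\in A'$ and whether $i\in B'$. The four local cases are:
\begin{itemize}
\item $i\in A'\cap B'$: the only choice is $i\in A$ and $i\in B$, so $c_i=1$.
\item $i\in A'\setminus B'$: $i\in A$ is forced, $i\in B$ is free, giving $c_i=1+(-1)=0$.
\item $i\in B'\setminus A'$: symmetrically, $c_i=0$.
\item $i\notin A'\cup B'$: the allowed pairs are $(1,0),(0,1),(1,1)$, giving $c_i=-1-1+1=-1$.
\end{itemize}

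Consequently $c(A',B')=0$ unless $A'=B'=:T$, and in that case $c(T,T)=(-1)^{n-|T|}$. The right-hand side therefore collapses to $\sum_T(-1)^{n-|T|}f(x_T)g(x_T)$, which is the left-hand side.

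Commutativity of $\mathfrak{R}$ is used only to rearrange the expanded products. The monoid structure plays no role, since the $x_T$ simply label the terms. The main point needing care is the factorization of the constrained sum over $(A,B)$ into independent coordinate sums. I would justify it by the bijection between pairs $(A,B)$ and maps $\underline{n}\to\{0,1\}^2$, under which each constraint and the sign are all coordinatewise.
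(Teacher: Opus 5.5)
Your proof is correct, and it takes a genuinely different route from the paper.

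The paper argues at the level of operators:
\begin{itemize}
\item it proves the one-variable discrete Leibniz rule $R_x(fg)=(R_xf)g+f(R_xg)+(R_xf)(R_xg)$;
\item it iterates this rule to get $R_{x_1}\cdots R_{x_n}(fg)=\sum_{A\cup B=\underline{n}}(R_Af)(R_Bg)$;
\item it evaluates both sides at $e$, where each $R_I h(e)$ expands into the corresponding alternating sum over subsets of $I$.
\end{itemize}

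You instead expand the right-hand side and compute the integer coefficient $c(A',B')$ of each term $f(x_{A'})g(x_{B'})$, factoring the constrained sum over the coordinates $i\in\underline{n}$. Your four local values $1,0,0,-1$ are right, and they reproduce exactly the left-hand side.

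The two arguments are the same identity seen from two sides. Your three admissible patterns $(1,0),(0,1),(1,1)$ at a coordinate outside $A'\cup B'$ are precisely the three terms of the Leibniz rule. So the paper factors the identity over the operators $R_{x_i}$, while you factor it over the indices.

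What each route buys:
\begin{itemize}
\item The paper's route ties the lemma to the difference-operator calculus used throughout the paper. However, it has to track the order in which the noncommuting operators $R_{x_i}$ accumulate during the iteration.
\item Your route avoids that bookkeeping. It also shows that the lemma is a purely formal identity for families indexed by subsets of $\underline{n}$. The monoid enters only through the labels $x_T$, and coincidences $x_{A'}=x_{B'}$ with $A'\neq B'$ are harmless because you group terms by index pairs, not by values.
\item Applying your identity to the translates $z\mapsto f(yz)$ and $z\mapsto g(yz)$ recovers the paper's pointwise operator identity at every $y\in S$, so nothing is lost.
\end{itemize}

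One small correction to your closing remark: commutativity of $\mathfrak{R}$ is not needed at all. Expanding a product of two finite sums uses only distributivity, and the coefficients $c(A',B')$ are integers, so the $f$-factors can stay on the left throughout.
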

\begin{proof}
Let us evaluate $R_x(f \cdot g)$ at an arbitrary point $y \in S$:
\[
R_x(fg)(y) = f(yx)g(yx) - f(y)g(y).
\]
Adding and subtracting $f(y)g(yx)$, we factor this expression as:
\[
R_x(fg)(y) = [f(yx) - f(y)]g(yx) + f(y)[g(yx) - g(y)] = (R_x f)(y) \cdot (\tau_x g)(y) + f(y) \cdot (R_x g)(y).
\]
Substituting $\tau_x = \operatorname{I} + R_x$ into the first term yields the fundamental $1$-variable discrete Leibniz rule:
\begin{equation}\label{eq:leibniz_1var}
R_x(fg) = (R_x f)g + f(R_x g) + (R_x f)(R_x g).
\end{equation}
For any subset of indices $I = \{i_1, \dots, i_k\} \subseteq \underline{n}$ with $i_1 < \dots < i_k$, define the compound right difference operator $R_I = R_{x_{i_1}} \cdots R_{x_{i_k}}$, setting $R_\emptyset = \operatorname{I}$.

Applying identity~\eqref{eq:leibniz_1var} successively for $i = 1, \dots, n$ in increasing order, each operator $R_{x_i}$ acts on the product $fg$ by being assigned to $f$ alone ($i \in A \setminus B$), to $g$ alone ($i \in B \setminus A$), or to both $f$ and $g$ simultaneously ($i \in A \cap B$). Collecting all choices over $i = 1, \dots, n$ yields pairs of subsets $(A, B)$ satisfying $A \cup B = \underline{n}$, giving the operator identity:
\begin{equation}\label{eq:leibniz_multivar}
R_{x_1} \cdots R_{x_n}(fg) = \sum_{A \cup B = \underline{n}} (R_A f) \cdot (R_B g).
\end{equation}
We now evaluate both sides of operator identity~\eqref{eq:leibniz_multivar} at $e$: On the left-hand side, expanding $R_{x_1} \cdots R_{x_n} = \prod_{i=1}^n (\tau_{x_i} - \operatorname{I})$ gives:
\[
R_{x_1} \cdots R_{x_n}(fg)(e) = \sum_{T \subseteq \underline{n}} (-1)^{n-|T|} (fg)\left(\prod_{i \in T} x_i\right) = \sum_{T \subseteq \underline{n}} (-1)^{n-|T|} f\left(\prod_{i \in T} x_i\right) g\left(\prod_{i \in T} x_i\right).
\]
On the right-hand side, for each pair $(A, B)$ with $A \cup B = \underline{n}$, expanding $R_A = \prod_{i \in A} (\tau_{x_i} - \operatorname{I})$ at $e$ yields:
\[
R_A f(e) = \sum_{A' \subseteq A} (-1)^{|A|-|A'|} f\left(\prod_{i \in A'} x_i\right),
\]
and similarly $R_B g(e) = \sum_{B' \subseteq B} (-1)^{|B|-|B'|} g\left(\prod_{j \in B'} x_j\right)$.

Substituting these expansions into the right-hand side of~\eqref{eq:leibniz_multivar} yields the desired right-hand side of Lemma~\ref{lem:6.combinatorial_split_proof}, where the outer sum runs over all pairs $(A, B)$ with $A \cup B = \underline{n}$. This completes the proof.
\end{proof}

\noindent \textbf{Proof of \eqref{fdegproduct} based on Cauchy $n$-balanced functional equation \eqref{balanced}: } 
Let $\operatorname{fdeg}(f) = d_1$ and $\operatorname{fdeg}(g) = d_2$, and define $n = d_1 + d_2 + 1$. To establish that $\operatorname{fdeg}(f \cdot g) \le n - 1$, it suffices to demonstrate that $f \cdot g$ satisfies the $n$-balanced Cauchy equation~\eqref{balanced}. That is, we must show that for any $x_1, \dots, x_n \in S$:
\[
\sum_{T\subseteq\underline{n}} (-1)^{n-|T|} (f \cdot g)\left(\prod_{i\in T} x_i\right) = 0,
\]
where, for each subset $T\subseteq \underline{n}$, the factors in the product $\prod_{i\in T} x_i$ appear in the natural increasing order of the indices, and for the empty set $T=\emptyset$ we put $\prod_{i\in \emptyset} x_i=e$. We follow this identical index-ordering convention for all sums indexed over subsets in what follows.  By definition of the pointwise product, the expression inside the summation can be written as:
\[
(f \cdot g)\left(\prod_{i\in T} x_i\right) = f\left(\prod_{i\in T} x_i\right) \cdot g\left(\prod_{i\in T} x_i\right).
\]
To evaluate the sum, we use Lemma \ref{lem:6.combinatorial_split_proof}, which claims that 
\begin{align}
\sum_{T \subseteq \underline{n}} (-1)^{n-|T|} f\left(\prod_{i\in T} x_i\right) g\left(\prod_{i\in T} x_i\right) &= \nonumber \\
\sum_{A \cup B = \underline{n}} \left( \sum_{A' \subseteq A} (-1)^{|A|-|A'|} f\left(\prod_{i \in A'} x_i\right) \right) &\cdot \left( \sum_{B' \subseteq B} (-1)^{|B|-|B'|} g\left(\prod_{j \in B'} x_j\right) \right). \label{eq:combinatorial_split}
\end{align}

Let us analyze the structure of the internal sums on the right-hand side of equation~\eqref{eq:combinatorial_split} for a fixed pair $(A, B)$ satisfying $A\cup B=\underline{n}$. Obviously, $|A| + |B| \geq |\underline{n}| = n = d_1 + d_2 + 1$.Thus, either $|A| \ge d_1 + 1$, or $|B| \ge d_2 + 1$.

Suppose that $|A| \ge d_1 + 1$. The inner alternating sum corresponding to the function $f$ 
corresponds precisely to an $|A|$-balanced Cauchy operator applied to $f$. Since $\operatorname{fdeg}(f) = d_1$ and $|A| \ge d_1 + 1$, the characterization of polynomial maps by the balanced Cauchy equation guarantees that this variation vanishes identically:
\[
\sum_{A' \subseteq A} (-1)^{|A|-|A'|} f\left(\prod_{i \in A' } x_i\right) = 0.
\]
Symmetrically, if $|B| \ge d_2 + 1$, the inner alternating sum corresponding to the function $g$ represents a $|B|$-balanced Cauchy operator applied to $g$. Since $\operatorname{fdeg}(g) = d_2$, this operator vanishes identically:
\[
\sum_{B' \subseteq B} (-1)^{|B|-|B'|} g\left(\prod_{j \in  B'} x_j\right) = 0.
\]
Consequently, for every such pair $(A, B)$, at least one of the two multiplying factors in the product on the right-hand side of equation~\eqref{eq:combinatorial_split} is equal to zero.  Because every term in the summation vanishes, the entire sum collapses to zero:
\[
\sum_{T\subseteq\{1,\dots,n\}} (-1)^{n-|T|} (f \cdot g)\left(\prod_{i\in T} x_i\right) = 0.
\]
Thus, the pointwise product $f \cdot g$ satisfies the $n$-balanced Cauchy equation identically. We conclude that $f \cdot g$ is a polynomial map of functional degree at most $n - 1$:
\[
\operatorname{fdeg}(f \cdot g) \le n - 1 = d_1 + d_2 = \operatorname{fdeg}(f) + \operatorname{fdeg}(g),
\]
which completes the proof.

\medskip

For a proof based on Aichinger's equation, we first show the following: 

\begin{lemma}\label{decompositionrightoperators}
    For any function $f \in G^S$ and elements $x_1, \dots, x_n \in S$, the value of $f$ at the product $x_1 \cdots x_n$ can be expanded via the right-difference operators $R_x$ as:
\begin{equation}\label{eq:right_diff_expansion}
f(x_1 x_2 \cdots x_n) = \sum_{I \subseteq \{1, \dots, n\}} R_{I} f(1).
\end{equation}
where 
$1$ is the identity element of $S$.
\end{lemma}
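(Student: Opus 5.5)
The identity is the noncommutative analogue of the discrete Newton/Taylor expansion. Throughout I use the convention from the proof of Lemma \ref{lem:6.combinatorial_split_proof}: for $I=\{i_1<\dots<i_k\}$ we set $R_I=R_{x_{i_1}}\cdots R_{x_{i_k}}$ and $R_\emptyset=\operatorname{I}$. The plan is to write translation as identity plus difference, $\tau^R_x=\operatorname{I}+R_x$, and expand a product of such operators.

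\textbf{Step 1: reduce $f(x_1\cdots x_n)$ to a product of translations.} For any $h\in G^S$ we have $(\tau^R_a\tau^R_b h)(y)=(\tau^R_b h)(ya)=h(yab)$, so $\tau^R_a\tau^R_b=\tau^R_{ab}$. Iterating gives
\[
f(x_1\cdots x_n)=(\tau^R_{x_1}\tau^R_{x_2}\cdots\tau^R_{x_n}f)(1).
\]
The operators keep their natural increasing order, which matches the ordering used in $R_I$.

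\textbf{Step 2: expand the product.} Substitute $\tau^R_{x_i}=\operatorname{I}+R_{x_i}$ and expand $\prod_{i=1}^n(\operatorname{I}+R_{x_i})$ by distributivity, in the same way as the expansion $\prod(\tau-\operatorname{I})$ earlier in the paper.
\begin{itemize}
\item Each term corresponds to choosing, for each $i$, either $\operatorname{I}$ or $R_{x_i}$.
\item Let $I$ be the set of indices where $R_{x_i}$ is chosen.
\item Distributivity only requires additivity of the operators, which holds since each $R_x$ is a group endomorphism of $G^S$.
\item No commutation of operators is needed, because the factors stay in increasing order of $i$ and therefore produce exactly $R_I$.
\end{itemize}
The result is $\tau^R_{x_1}\cdots\tau^R_{x_n}=\sum_{I\subseteq\{1,\dots,n\}}R_I$.

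\textbf{Step 3: conclude.} Evaluating this operator identity applied to $f$ at $1$ gives \eqref{eq:right_diff_expansion}.

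\textbf{Alternative route.} One can instead proceed by induction on $n$, writing
\[
f(x_1\cdots x_n)=(\tau^R_{x_1}g)(1),\qquad g(y)=f(yx_2\cdots x_n),
\]
and using $\tau^R_{x_1}=\operatorname{I}+R_{x_1}$.

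\textbf{Main difficulty.} The only delicate point is the order bookkeeping. One must check that composition $\tau^R_a\tau^R_b$ yields $\tau^R_{ab}$ and not $\tau^R_{ba}$, so that the ordering convention in $R_I$ is the correct one. A second route would be Möbius inversion of the identity $R_I f(1)=\sum_{J\subseteq I}(-1)^{|I|-|J|}f(\prod_{j\in J}x_j)$, but this is unnecessary.
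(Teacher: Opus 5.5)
Your main argument is correct and is essentially the paper's proof. The paper obtains the same ordered expansion of $\prod_{i=1}^n(\operatorname{I}+R_{x_i})$ by induction on $n$. It peels off the last factor via $f(x_1\cdots x_kx_{k+1})=f(x_1\cdots x_k)+R_{x_{k+1}}f(x_1\cdots x_k)$ and applies the hypothesis to both $f$ and $R_{x_{k+1}}f$. You instead get the expansion in one step from $\tau^R_{x_1}\cdots\tau^R_{x_n}=\tau^R_{x_1\cdots x_n}$ and distributivity, which also yields the identity at every base point $y$, not only at $1$.

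Only your sketched alternative route, which peels off $x_1$, would need that stronger induction hypothesis (base point $y$, or operator-level). The reason is that $R_{x_1}g(1)=g(x_1)-g(1)$ requires the expansion of $g$ at $x_1$, whereas the lemma as stated only gives it at $1$.
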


\begin{proof}
Recall that $R_y f(x) = f(xy) - f(x)$. So, by isolating the shifted term, we get
\[
f(xy) = f(x) + R_y f(x) = (\operatorname{I} + R_y)f(x),
\]
where $\operatorname{I}$ denotes the identity operator. We proceed to prove the general expansion identity~\eqref{eq:right_diff_expansion} by induction on the number of factors $n$.

For a single element $x_1 \in S$, evaluating the function at $x_1$ is equivalent to evaluating it at the right-shifted identity element $1 \cdot x_1$:
\[
f(x_1) = f(1 \cdot x_1) = f(1) + R_{x_1}f(1).
\]
The index set $\{1\}$ possesses exactly two subsets: the empty set $\emptyset$ and the singleton $\{1\}$. Expanding the right-hand side of equation~\eqref{eq:right_diff_expansion} under the convention that $R_{\emptyset} = \operatorname{I}$ yields:
\[
\sum_{I \subseteq \{1\}} R_I f(1) = R_{\emptyset}f(1) + R_{\{1\}}f(1) = f(1) + R_{x_1}f(1).
\]
Thus, the base case holds identically.

Assume that the identity holds for a product of $k$ elements. That is, for any choice of elements $x_1, \dots, x_k \in S$, we have:
\begin{equation}\label{eq:induction_hypothesis}
f(x_1 x_2 \cdots x_k) = \sum_{J \subseteq \{1, \dots, k\}} R_{J} f(1).
\end{equation}
Now, consider the evaluation of a product containing $k+1$ elements, written as $(x_1 \cdots x_k) \cdot x_{k+1}$. Applying the fundamental right-difference relation to strip off the terminal element $x_{k+1}$ gives:
\begin{equation}\label{eq:split_step}
f(x_1 \cdots x_k x_{k+1}) = f(x_1 \cdots x_k) + R_{x_{k+1}}f(x_1 \cdots x_k).
\end{equation}
We now apply the induction hypothesis~\eqref{eq:induction_hypothesis} directly to both terms on the right-hand side of equation~\eqref{eq:split_step}:
\begin{enumerate}
    \item The first term expands directly over the power set of $\{1, \dots, k\}$:
    \[
    f(x_1 \cdots x_k) = \sum_{J \subseteq \{1, \dots, k\}} R_{J} f(1).
    \]
    \item For the second term, we define an auxiliary function $g \in G^S$ by $g(x) = R_{x_{k+1}}f(x)$. Applying the induction hypothesis to the evaluation $g(x_1 \cdots x_k)$ reveals:
    \[
    R_{x_{k+1}}f(x_1 \cdots x_k) = g(x_1 \cdots x_k) = \sum_{J \subseteq \{1, \dots, k\}} R_{J} g(1) = \sum_{J \subseteq \{1, \dots, k\}} R_{J} \left(R_{x_{k+1}}f(1)\right).
    \]
    Since the index $k+1$ is strictly greater than any index contained in the subset $J$, the operation $R_{x_{k+1}}$ is placed correctly at the end of the chain, preserving the natural increasing order of indices:
    \[
    R_{J} \left(R_{x_{k+1}}f(1)\right) = R_{J \cup \{k+1\}} f(1).
    \]
\end{enumerate}
Substituting both independent subset sums back into equation~\eqref{eq:split_step} yields:
\[
f(x_1 \cdots x_{k+1}) = \sum_{J \subseteq \{1, \dots, k\}} R_{J} f(1) + \sum_{J \subseteq \{1, \dots, k\}} R_{J \cup \{k+1\}} f(1).
\]
Any arbitrary subset $I \subseteq \{1, \dots, k, k+1\}$ can be classified into one of two disjoint classes: it either completely excludes the terminal element $k+1$, or it includes it. 
\begin{itemize}
    \item The first summation ranges over all valid subsets $I$ that do not contain $k+1$ (where $I = J$).
    \item The second summation ranges over all valid subsets $I$ that must contain $k+1$ (where $I = J \cup \{k+1\}$).
\end{itemize}
Recombining these two disjoint collections of subsets reconstitutes the complete power set of $\{1, \dots, k+1\}$:
\[
f(x_1 x_2 \cdots x_{k+1}) = \sum_{I \subseteq \{1, \dots, k+1\}} R_{I} f(1).
\]
This proves the claim for all $n \ge 1$.

\end{proof}

\noindent \textbf{Proof of \eqref{fdegproduct} based on Aichinger's functional equation \eqref{61_Aichinger}: } 
Let $\operatorname{fdeg}(f) = d_1$ and $\operatorname{fdeg}(g) = d_2$. Define $n = d_1 + d_2 + 1$. To show that $\operatorname{fdeg}(f \cdot g) \le n - 1$, it suffices to show that the multi-variable evaluation $(f \cdot g)(x_1 x_2 \cdots x_n)$ can be decomposed into a sum of functions where each term omits at least one variable $x_i$.

Lemma \ref{decompositionrightoperators} informs us that for any function $h \in \mathfrak{R}^S$ and elements $x_1, \dots, x_n \in S$, the value of $h$ at the product $x_1 \cdots x_n$ can be expanded via the right-difference operators $R_x$ as:
\[
h(x_1 x_2 \cdots x_n) = \sum_{I \subseteq \{1, \dots, n\}} R_{I} h(1),
\]
where $R_{I}$ represents the composition of operators $R_{x_i}$ for $i \in I$ in increasing order of indices, and $1$ is the identity element of $S$. Applying this identity to the functions $f$ and $g$ individually yields:
\[
f(x_1 \cdots x_n) = \sum_{A \subseteq \{1, \dots, n\}} R_{A} f(1) \quad \text{and} \quad g(x_1 \cdots x_n) = \sum_{B \subseteq \{1, \dots, n\}} R_{B} g(1).
\]
Because $\mathfrak{R}$ is a commutative ring, the pointwise product of the evaluations satisfies:
\[
(f \cdot g)(x_1 \cdots x_n) = f(x_1 \cdots x_n) \cdot g(x_1 \cdots x_n) = \sum_{A, B \subseteq \{1, \dots, n\}} \left( R_{A} f(1) \cdot R_{B} g(1) \right).
\]
Now, let us examine the subsets of indices $A$ and $B$ in the summation. Since $\operatorname{fdeg}(f) = d_1$ and $\operatorname{fdeg}(g) = d_2$, the difference operator $R_{A} f$ vanishes identically if $|A| > d_1$, and $R_{B} g$ vanishes identically if $|B| > d_2$. Thus, the only non-zero terms in the sum occur when both $|A| \le d_1$ and $|B| \le d_2$. 

For any such surviving pair of subsets $(A, B)$, the cardinality of their union satisfies the bound:
\[
|A \cup B| \le |A| + |B| \le d_1 + d_2 = n - 1.
\]
Since the total number of available variables is $n$, the fact that $|A \cup B| \le n - 1$ guarantees that for every single non-zero term in the expansion, there exists at least one index $i \in \{1, \dots, n\}$ such that $i \notin A \cup B$. Consequently, neither $R_{A} f(1)$ nor $R_{B} g(1)$ depends on the variable $x_i$.

We can therefore partition the summation by grouping terms according to the first missing variable index. For each $i \in \{1, \dots, n\}$, let $\mathcal{M}_i$ be the collection of pairs $(A, B)$ such that $i \notin A \cup B$ and $i$ is the minimal index with this property. We define:
\[
F_i(x_1, \dots, \widehat{x_i}, \dots, x_n) = \sum_{(A,B) \in \mathcal{M}_i} \left( R_{A} f(1) \cdot R_{B} g(1) \right).
\]
By construction, each $F_i$ is a well-defined function of $n-1$ variables that does not depend on $x_i$. Summing over all possible missing index blocks gives:
\[
(f \cdot g)(x_1 x_2 \cdots x_n) = \sum_{i=1}^n F_i(x_1, \dots, \widehat{x_i}, \dots, x_n).
\]
This expansion matches the form of equation~\eqref{61_Aichinger} identically. Hence  $f \cdot g$ is a polynomial map of functional degree:
\[
\operatorname{fdeg}(f \cdot g) \le n - 1 = d_1 + d_2 = \operatorname{fdeg}(f) + \operatorname{fdeg}(g),
\]
which completes the proof.

\section{Semipolynomial maps with range a uniquely divisible commutative group }

A map $f:S\to G$ is a right-semipolynomial map of right-functional degree $\leq m$ if
\[
R_h^{m+1} (f)(x) = 0\quad \text{{\rm for all }} x,h\in S, 
\]
and it is a left-semipolynomial map of left-functional degree $\leq m$ if 
\[
L_h^{m+1} (f)(x) = 0\quad \text{{\rm for all }} x,h\in S. 
\]
Finally, $f$ is a semipolynomial map of functional degree $\leq m$ if 
\[
D_1D_2\cdots D_{m+1}f(x)=0 \quad \text{{\rm for all }} x,h\in S, \text{{\rm whenever }} D_i\in \{R_h,L_h\}, \ i=1,\dots,m+1.
\]
We write $\mathcal{SP}_m^R(S,G)$, $\mathcal{SP}_m^L(S,G)$, and $\mathcal{SP}_m(S,G)$ for these sets. Also, we use the following notations: $\mathcal{P}(S,G)=\bigcup_{m\ge 0}\mathcal{P}_m(S,G)$, $\mathcal{SP}^R(S,G)=\bigcup_{m\ge 0}{SP}^R_m(S,G)$, $\mathcal{SP}^L(S,G)=\bigcup_{m\ge 0}{SP}^L_m(S,G)$, and 
$\mathcal{SP}(S,G)=\bigcup_{m\ge 0}{SP}_m(S,G)$.

Clearly $$\mathcal{P}_m(S,G)\subseteq \mathcal{SP}_m(S,G)\subseteq \mathcal{SP}_m^R(S,G)\cap \mathcal{SP}_m^L(S,G) \subseteq \mathcal{SP}_m^R(S,G)\cup \mathcal{SP}_m^L(S,G)$$ for every $m$. In general, the reverse inclusions do not hold.  In \cite{Shulman_2019} the class $\mathcal{CS}$ of commensurable semigroups (i.e. the semigroups $S$ satisfying that $gS=Sg$ for all $g\in S$) was introduced and, it was used to prove that, if $S\in \mathcal{CS}$ and $G$ is a uniquely divisible commutative group (equivalently, $G$ is the additive group of a
$\mathbb Q$-vector space),  then $\mathcal{SP}_m^R(S,G)= \mathcal{SP}_m^L(S,G)$ and $\mathcal{SP}_m^R(S,G)\subseteq \mathcal{P}_{(m+1)\cdot z(2m+1)}^R(S,G)$ for all $m$, where
$z(\cdot)$ is the function arising from Zelmanov's theorem on Engel Lie algebras. In particular, 
 $\mathcal{P}(S,G)=\mathcal{SP}(S,G)=\mathcal{SP}^L(S,G)=\mathcal{SP}^R(S,G)$. This result, that I call Shulman's Theorem, has a difficult proof and is based on a deep theorem of Zelmanov (whose statement as well as a detailed proof can be found in \cite{Zelmanov_1987,Zelmanov_1990}) that has been useful in different branches of mathematics. It is, perhaps, the deepest known theorem about the structure of polynomial maps defined on arbitrary semigroups. A detailed proof of Shulman's theorem can also be found in \cite[Chapter 6]{AlmiraHu}. 

Condition $S\in\mathcal{CS}$ is sufficient but not necessary to guarantee that $\mathcal{SP}(S,G)=\mathcal{P}(S,G)$. For example, if we set $S = (M_2(\mathbb{R}),\cdot)$, the semigroup of all $2\times 2$ real matrices under multiplication, then $S\not\in \mathcal{CS}$, and it is not difficult to prove that $\mathcal{SP}(M_2(\mathbb R),\mathbb{C})
=
\mathcal{P}(M_2(\mathbb R),\mathbb{C})
=
\{\text{constant functions}\}$.

Indeed, consider
\[
g =
\begin{pmatrix}
1 & 0\\
0 & 0
\end{pmatrix}.
\]
Then
\[
gS = \left\{
\begin{pmatrix}
a & b\\
0 & 0
\end{pmatrix}
:a,b\in\mathbb{R}\right\}, \qquad
Sg = \left\{
\begin{pmatrix}
a & 0\\
b & 0
\end{pmatrix}:a,b\in\mathbb{R}
\right\}.
\]
These sets are different, so $gS \neq Sg$, and therefore $S \notin \mathcal{CS}$.

Let us now study the sets of polynomials and semipolynomials $f:S\to\mathbb{C}$. Set 
\[
0_2=
\begin{pmatrix}
0&0\\
0&0
\end{pmatrix}.
\]
Clearly, \(0_2\) is an absorbing element of \(S\): $x0_2=0_2$  for all $x\in S$. Hence
\[
\Delta_{0_2}f(x)
=
f(x0_2)-f(x)
=
f(0_2)-f(x).
\]
Applying \(\Delta_{0_2}\) once more gives
\[
\Delta_{0_2}^2f(x)
=
\Delta_{0_2}\bigl(f(0_2)-f(x)\bigr)
=
-\bigl(f(0_2)-f(x)\bigr)=
-\Delta_{0_2}f(x).
\]
By induction,
\[
\Delta_{0_2}^{\,k}f
=
(-1)^{k-1}\Delta_{0_2}f,
\qquad k\ge1.
\]

Suppose first that \(f\) is a semipolynomial of degree at most \(n\). Then
\[
0
=
\Delta_{0_2}^{\,n+1}f
=
(-1)^n\Delta_{0_2}f,
\]
and therefore
\[
\Delta_{0_2}f=0.
\]
Thus
\[
f(0_2)-f(x)=0
\qquad\text{for all }x\in S,
\]
which implies
\[
f(x)=f(0_2)
\qquad\text{for all }x\in S.
\]
Hence \(f\) is constant. But all constant functions are polynomials, and all polynomials are semipolynomials. Hence 
\[
\mathcal{SP}(M_2(\mathbb R),\mathbb{C})
=
\mathcal P(M_2(\mathbb R),\mathbb{C})
=
\{\text{constant functions}\}.
\]

In this paper we completely characterize the polynomial maps $f:\mathcal{A}^*\to\mathbb{C}$, where $S=\mathcal{A}^*$ denotes the monoid of words under concatenation, and we prove that, if there exists a monoid $S$ such that $\mathcal{SP}(S,\mathbb{C})\neq \mathcal{P}(S,\mathbb{C})$, then also $\mathcal{SP}(\mathcal{A}^*,\mathbb{C})\neq\mathcal{P}(\mathcal{A}^*,\mathbb{C})$ for a certain alphabet $\mathcal{A}$.  We propose as an open problem to prove or disprove that  $\mathcal{SP}(\mathcal{A}^*,\mathbb{C})=\mathcal{P}(\mathcal{A}^*,\mathbb{C})$ for arbitrary alphabets $\mathcal{A}$. 




\section{Polynomial maps on the monoid of words}
 
\subsection{Finite alphabets}
\begin{theorem}\label{theo:subwordcounting}
Let $\mathcal{A}^*$ denote the free monoid generated by a finite alphabet $\mathcal{A}$ of size  $|\mathcal{A}|\geq 2$, under concatenation. Then 
\[
\mathcal{P}(\mathcal{A}^*,\mathbb{C})=\operatorname{span}_{\mathbb{C}}\{f_w:w\in \mathcal{A}^*\},
\] 
where $f_w(x)=[x]_{w}$ is the subword counting function associated to $w$, which counts the number of times the word $w$ can be found as a (scattered) subword of the word $x$. Moreover, $\operatorname{fdeg}(f_w)= |w|$ for each $w\in \mathcal{A}^*$.   
\end{theorem}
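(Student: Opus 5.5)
We prove two inclusions, and handle the degree claim along the way.

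\textbf{The functions $f_w$ are polynomial maps with $\operatorname{fdeg}(f_w)\le |w|$.} The key identity is the concatenation rule for subword counts:
\[
f_w(xy)=\sum_{w=uv} f_u(x)f_v(y),
\]
where the sum runs over all factorizations of $w$ into a prefix $u$ and a suffix $v$, and $f_e\equiv 1$. Consequently
\[
R_y f_w(x)=\sum_{w=uv,\ v\neq e} f_u(x)f_v(y),
\]
which is a linear combination, with coefficients depending on $y$, of the functions $f_u$ with $|u|<|w|$. By induction on $|w|$, each $R_y f_w$ lies in $\mathcal{P}_{|w|-1}$. Hence $f_w\in\mathcal{P}^R_{|w|}(\mathcal{A}^*,\mathbb{C})$, and by \eqref{allpolequal} $f_w\in\mathcal{P}_{|w|}(\mathcal{A}^*,\mathbb{C})$. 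The same argument applied to the Aichinger equation \eqref{61_Aichinger} would also work, since iterating the concatenation rule gives
\[
f_w(x_1\cdots x_n)=\sum_{w=u_1\cdots u_n}\prod_i f_{u_i}(x_i).
\]

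\textbf{Every polynomial map lies in the span of the $f_w$.} Let $f$ satisfy the balanced equation \eqref{balanced} with $n=d+1$. Each word $x=a_1\cdots a_m$, with $a_i\in\mathcal{A}$, can be fed into Lemma \ref{decompositionrightoperators} letter by letter:
\[
f(a_1\cdots a_m)=\sum_{I\subseteq\{1,\dots,m\}} R_{a_{i_1}}\cdots R_{a_{i_k}}f(e).
\]
Every term with $|I|>d$ vanishes. Define $c_v:=R_{v_1}\cdots R_{v_k}f(e)$ for each word $v=v_1\cdots v_k$ with $k\le d$. Grouping the subsets $I$ by the word $v=a_{i_1}\cdots a_{i_k}$ they spell, we get
\[
f(x)=\sum_{|v|\le d} c_v\,[x]_v .
\]
The number of index sets $I$ spelling $v$ is exactly $[x]_v$, so this is a finite sum because $\mathcal{A}$ is finite. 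Hence $f\in\operatorname{span}\{f_w\}$, and in fact $f$ lies in the span of the $f_w$ with $|w|\le\operatorname{fdeg}f$.

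\textbf{Exact degree $\operatorname{fdeg}(f_w)=|w|$.} This is the main obstacle, because it requires a nonvanishing witness. With $|w|=k$ and $w=w_1\cdots w_k$, I would evaluate $R_{w_1}\cdots R_{w_k}f_w(e)$ using the balanced expansion:
\[
\sum_{T\subseteq\{1,\dots,k\}}(-1)^{k-|T|}[w_T]_w ,
\]
where $w_T$ denotes the subword of $w$ indexed by $T$. For $|T|<k$ we have $|w_T|<|w|$, so $[w_T]_w=0$. Only $T=\{1,\dots,k\}$ survives, and it contributes $[w]_w=1$. The sum therefore equals $1\neq0$, so $f_w\notin\mathcal{P}_{k-1}$, which gives $\operatorname{fdeg}(f_w)=k$.

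I expect the linear independence of the $f_w$ not to be needed for the statement. The hypothesis $|\mathcal{A}|\ge2$ seems inessential for this argument, and I would remark on that. The one step to double-check is the ordering convention in Lemma \ref{decompositionrightoperators}: the operators must be applied in increasing index order, which matches the left-to-right scattered-subword order.
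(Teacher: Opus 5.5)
Your proof is correct, and it departs from the paper in the key step. The bound $\operatorname{fdeg}(f_w)\le|w|$ via the concatenation rule is the paper's argument. Your degree witness is the same operator $R_{w_1}\cdots R_{w_k}$: the paper shows $R_{w_1}\cdots R_{w_k}f_w\equiv1$ by stripping letters from the right, and you evaluate it at $e$ through the balanced expansion, which is equivalent. The genuine difference is in the spanning inclusion. The paper inducts on the degree: it writes $R_af$ and $R_bf$ in the span of shorter counters and builds $F=\sum c_wf_w$ with $c_{ua}=\alpha_u$, $c_{ub}=\beta_u$, so that $R_g(f-F)=0$ for every letter $g$; hence $f-F$ is constant. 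You instead feed the letter factorization of $x$ into Lemma~\ref{decompositionrightoperators} and group index sets by the word they spell, obtaining directly
\[
f=\sum_{|v|\le d}c_vf_v,\qquad c_v=R_{v_1}\cdots R_{v_k}f(e)=\sum_{T\subseteq\{1,\dots,k\}}(-1)^{k-|T|}f(v_T),
\]
with $v_T$ the subword of $v$ indexed by $T$. This is a noncommutative Newton interpolation formula; for a one-letter alphabet it is exactly $f(a^m)=\sum_k R_a^kf(e)\binom{m}{k}$. The ordering point you flag checks out: the lemma uses $R_I=R_{x_{i_1}}\cdots R_{x_{i_k}}$ with $i_1<\dots<i_k$, so $R_If(e)$ depends only on the spelled word. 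Your route gives explicit coefficients with no induction. Combined with $R_{v_1}\cdots R_{v_k}f_w(e)=\delta_{v,w}$, it also yields the linear independence that the paper proves separately. That identity comes from the same inclusion--exclusion computation you do for $v=w$, plus $\operatorname{fdeg}(f_w)\le|w|$ when $|v|>|w|$. The paper's antidifference argument avoids the expansion lemma and follows the more familiar template. You are right that independence is not needed for the statement and that $|\mathcal{A}|\ge2$ is irrelevant; only finiteness of $\mathcal{A}$ is used, to make the sum finite.
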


\begin{proof}
There is no loss of generality in assuming that $|\mathcal{A}|=2$, so that  $\mathcal{A}=\{a,b\}$ with $a\neq b$, since the only property of $\mathcal{A}$ that will be used in the proof is that $2\leq |\mathcal{A}|< \infty$. 
Moreover, we know that 
$\mathcal P_n(\mathcal{A}^*,\mathbb{C})= \mathcal P_n^L(\mathcal{A}^*,\mathbb{C})=\mathcal P_n^R(\mathcal{A}^*,\mathbb{C})$, so that it can also be asssumed with no loss of generality that all computations are performed with right-polynomials. Clearly $[x]_\epsilon = 1$ for all words $x$ (including the empty word $x=\epsilon$) and $[\epsilon]_w = 0$ for $w \neq \epsilon$. Moreover, $[x]_a=N_a(x)$ and $[x]_b=N_b(x)$ represent the number of occurrences of $a$ and $b$, respectively, in the word $x$. It is easy to check that, for every polynomial $p\in\mathbb{C}[X,Y]$, the function $f(x)=p(N_a(x),N_b(x))$ is a polynomial map. Thus, proving this theorem demonstrates that these functions do not exhaust all polynomial maps $f:\mathcal{A}^*\to\mathbb{C}$. Indeed, $[x]_{ab}\neq g(N_a(x),N_b(x))$ for every function $g$ since, for example, $N_a(aab)=N_a(aba)=2$ and $N_b(aab)=N_b(aba)=1$, but $[aab]_{ab}=2 \neq 1=[aba]_{ab}$. 

As a first step, let us show that the subword counting maps are linearly independent. Suppose we have a finite linear combination that vanishes identically for all $x \in S$:
\[
F(x)=\sum_{w} c_w [x]_w = 0 \quad \text{for all } x\in \mathcal{A}^*.
\]
We show that $c_w = 0$ for all $w$ by induction on the length of the word $w$, denoted $|w|$.

The base case is $|w|=0$, which holds only when $w = \epsilon$. Evaluating at $x=\epsilon$ gives $0=F(\epsilon)= \sum_{w} c_w [\epsilon]_w$. The only non-zero term that appears in the sum is when $w = \epsilon$, in which case $[\epsilon]_\epsilon=1$, so that $c_\epsilon [\epsilon]_\epsilon = c_\epsilon = 0$.

Suppose now that $c_w = 0$ for all words of length $|w| < n$. Let $v$ be a word of length $n$. Computing $0=F(v)$ yields:
\begin{itemize}
    \item If $|w| > n$, then $w$ cannot appear as a subsequence of $v$, so $[v]_w = 0$.
    \item If $|w| < n$, then $c_w = 0$ by our inductive hypothesis.
    \item If $|w| = n$, the only word of length $n$ that can appear as a subsequence of $v$ is $v$ itself, and it appears exactly once ($[v]_v = 1$). For any other word $w$ of length $n$, $[v]_w = 0$.
\end{itemize}
Hence,
\[
0=F(v)=\sum_{w} c_w [v]_w = c_v [v]_v = c_v.
\]
By induction, $c_w = 0$ for all words $w$. Therefore, the subword counting functions are linearly independent.

Let us now prove that every right polynomial map $f:\mathcal{A}^*\to \mathbb{C}$ is a linear combination of these functions. A crucial property of subword counters is how they behave under concatenation. For any words $x, y$ and a target subword $w$, the count $[xy]_w$ can be broken down by how much of $w$ comes from $x$ and how much comes from $y$. Splitting $w$ into two parts $w = u v$, we get:
\[
[xy]_w = \sum_{w = uv} [x]_u [y]_v.
\]
Applying the right difference operator $R_y f(x) = f(xy) - f(x)$ to $f(x) = [x]_w$, the $u=w, v=\epsilon$ term ($[x]_w [y]_\epsilon = [x]_w$) cancels out, leaving:
\[
R_y [x]_w = \sum_{\substack{w = uv \\ v \neq \epsilon}} [y]_v [x]_u.
\]
Hence, the right difference operator maps a subword counter of length $|w|$ to a linear combination of subword counters of strictly shorter length $|u| < |w|$. This implies that every word counter map $f_w(x)=[x]_w$ is a right-polynomial map of right-functional degree at most $|w|$, and thus, by \eqref{allpolequal}, a  polynomial map with $\operatorname{fdeg}(f_w) \le |w|$.

Now, let $f: \mathcal{A}^* \to \mathbb{C}$ be any right-polynomial map of right-functional degree $n$. We prove by induction on $n$ that $f$ is a linear combination of counter words $f_w$ with $|w|\leq n$.

If the right-functional degree of $f$ is $0$, then $R_y f(x) = 0$ for all $y$, meaning $f(x) = c$ (a constant). Since $c = c \cdot [x]_\epsilon$, this proves the base case. 

Assume that every right polynomial map of functional degree $\le n-1$ can be written as a finite linear combination of subword counters of length $\le n-1$.

Let $f$ have right-functional degree $n$. For each generator $g \in \{a,b\}$, the directional difference $R_g f(x) = f(xg) - f(x)$ has right-functional degree $\le n-1$. By our inductive hypothesis, these differences can be expressed uniquely in our basis:
\[
R_a f(x) = \sum_{|u| \le n-1} \alpha_u [x]_u \quad \text{and} \quad R_b f(x) = \sum_{|u| \le n-1} \beta_u [x]_u.
\]

We now explicitly construct a right polynomial map $F(x)$ of degree $\le n$ that matches these differences. For any word $w \in S$, the concatenation formula yields:
\[
R_g [x]_w = \begin{cases} [x]_u & \text{if } w = ug, \\ 0 & \text{otherwise.} \end{cases}
\]
Let us define $F(x)$ as a linear combination of subword counters of length up to $n$:
\[
F(x) = \sum_{|w| \le n} c_w [x]_w.
\]
Applying the difference operators to our proposed $F(x)$, we obtain:
\[
R_a F(x) = \sum_{|u| \le n-1} c_{ua} [x]_u \quad \text{and} \quad R_b F(x) = \sum_{|u| \le n-1} c_{ub} [x]_u.
\]
To guarantee that $R_a F(x) = R_a f(x)$ and $R_b F(x) = R_b f(x)$, we match coefficients for every word $u$ of length $\le n-1$:
\[
c_{ua} = \alpha_u \quad \text{and} \quad c_{ub} = \beta_u.
\]
This completely and uniquely determines the coefficients $c_w$ for all non-empty words $w$ of length $\le n$, because every non-empty word $w$ must end in either $a$ or $b$ (i.e., can be written uniquely as $ua$ or $ub$). We set the constant term coefficient $c_\epsilon = 0$.

Now, consider the error function $h(x) = f(x) - F(x)$. By construction:
\[
R_a h(x) = R_a f(x) - R_a F(x) = 0,
\]
\[
R_b h(x) = R_b f(x) - R_b F(x) = 0.
\]
Since $R_g h(x) = h(xg) - h(x) = 0$ for both generators $g \in \{a,b\}$, it follows that $h(xa) = h(x)$ and $h(xb) = h(x)$ for all $x \in S$. Since every word in $S$ is built by successively appending generators to the empty word $\epsilon$, we must have:
\[
h(x) = h(\epsilon) = C \quad \text{for all } x \in S.
\]
Thus, we can recover $f(x)$ as:
\[
f(x) = F(x) + C = F(x) + C[x]_\epsilon.
\]
Since $F(x)$ is a linear combination of subword counters of length $\le n$, $f(x)$ is also a linear combination of subword counters of length $\le n$. This completes the inductive step. Hence
\[
\mathcal{P}_n(\mathcal{A}^*,\mathbb{C})=\operatorname{span}\{f_w:|w|\leq n\}, \quad n\in\mathbb{N},
\]
and \[
\mathcal{P}(\mathcal{A}^*,\mathbb{C})=\operatorname{span}\{f_w:w\in \mathcal{A}^*\}.
\]
Let us now prove that $\operatorname{fdeg}(f_w)=|w|$ for every word $w\in S$. Let $w = w_1 w_2 \dots w_d \in S$ be a word of length $|w| = d$. We track the sequential reduction of the map $f_w$ under targeted directional difference steps. To match the right-concatenation structure, we choose our directional translation steps to be individual single-letter words matching the letters of $w$ in reverse order, from right to left: $h_1 = w_d, h_2 = w_{d-1}, \dots, h_d = w_1$.

Consider the first difference operator $R_{w_d}$ acting on $f_w(x)$. By tracking how subsequences can be formed in the concatenated word $x w_d$, we separate the occurrences into those contained entirely within $x$ and those that utilize the newly appended final letter $w_d$:
\[
f_w(x w_d) = f_w(x) + f_{w_1 \dots w_{d-1}}(x).
\]
Applying the definition of the right-difference operator yields:
\[
R_{w_d} f_w(x) = f_w(x w_d) - f_w(x) = f_{w_1 \dots w_{d-1}}(x).
\]
Hence, the operator $R_{w_d}$ strips exactly the final letter off the tracking requirements of the subword counter map. Iterating this process sequentially for the last $k$ letters of $w$ reveals the general reduction rule:
\begin{equation}\label{reduction}
R_{w_{d-k+1}} \dots R_{w_d} f_w(x) = f_{w_1 \dots w_{d-k}}(x).   
\end{equation}
When exactly $d$ of these specific sequential difference operators are applied to $f_w$ in this reverse order, the subword requirement is completely exhausted from the right, reducing the function to the indicator map of the empty word $\epsilon$:
\begin{equation}\label{reduction1}
R_{w_1} \dots R_{w_d} f_w(x) = f_\epsilon(x) = 1.
\end{equation}
Because the $d$-th nested difference of $f_w$ evaluates to a non-zero constant ($1 \neq 0$), the map cannot be bounded within any polynomial class of degree strictly less than $d$, establishing the lower bound:
\[
\operatorname{fdeg}(f_w) \ge d.
\]
Since we have already proven that $\operatorname{fdeg}(f_w) \le d$, we conclude that the functional degree matches the word length identically:
\[
\operatorname{fdeg}(f_w) = d = |w|.
\]
This completes the proof. 
\end{proof}

\subsection{Infinite alphabets}
A natural question is: what happens when $\mathcal{A}$ is infinite? 
\begin{theorem}\label{theo:infinitealphabets}
A function $f: \mathcal{A}^* \to \mathbb{C}$ is a polynomial map of degree $\le n$ on $\mathcal{A}^*$ if and only if, for every finite sub-alphabet $B \subset \mathcal{A}$, the restricted function $f|_{B^*}: B^* \to \mathbb{C}$ is a finite linear combination of subword counters $f_w$, with $w \in B^*$ and $|w| \le n$. In other words, $f\in\mathcal{P}_n(\mathcal{A}^*,\mathbb{C})$ if and only if $f_{|\mathcal{B}^*}\in \mathcal{P}_n(B^*,\mathbb{C})$ for each finite alphabet $\mathcal{B}\subseteq \mathcal{A}$. 
\end{theorem}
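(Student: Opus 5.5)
The plan is to reduce everything to the Cauchy $(n+1)$-balanced characterization \eqref{balanced}, which involves only finitely many elements of the monoid at a time. By \eqref{allpolequal} and the discussion around \eqref{balanced}, $f\in\mathcal{P}_n(S,G)$ if and only if
\[
\sum_{T\subseteq\{1,\dots,n+1\}}(-1)^{n+1-|T|}f\Bigl(\prod_{i\in T}x_i\Bigr)=0\quad\text{for all }x_1,\dots,x_{n+1}\in S.
\]
This holds for any monoid $S$, in particular for $\mathcal{A}^*$ and for each submonoid $B^*$ with $B\subseteq\mathcal{A}$. The second formulation of the statement then follows from a finiteness argument. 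The first formulation is obtained from the second via Theorem \ref{theo:subwordcounting}.

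\textbf{Necessity.} Suppose $f\in\mathcal{P}_n(\mathcal{A}^*,\mathbb{C})$ and $B\subseteq\mathcal{A}$ is finite. Then $B^*$ is a submonoid of $\mathcal{A}^*$ with the same identity $\epsilon$. Every product $\prod_{i\in T}x_i$ with $x_i\in B^*$ stays in $B^*$. So the balanced identity for $f$ on $\mathcal{A}^*$, restricted to arguments in $B^*$, is exactly the balanced identity for $f|_{B^*}$, and hence $f|_{B^*}\in\mathcal{P}_n(B^*,\mathbb{C})$.

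\textbf{Sufficiency.} Conversely, suppose $f|_{B^*}\in\mathcal{P}_n(B^*,\mathbb{C})$ for every finite $B$. Given arbitrary $x_1,\dots,x_{n+1}\in\mathcal{A}^*$, let $B$ be the finite set of letters occurring in them. All the products $\prod_{i\in T}x_i$ lie in $B^*$, so the balanced sum for $f$ at $(x_1,\dots,x_{n+1})$ coincides with the balanced sum for $f|_{B^*}$ and therefore vanishes. Hence $f\in\mathcal{P}_n(\mathcal{A}^*,\mathbb{C})$.

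\textbf{Passing to subword counters.} To get the first formulation, apply Theorem \ref{theo:subwordcounting}, or rather its proof, which gives $\mathcal{P}_n(B^*,\mathbb{C})=\operatorname{span}\{f_w:w\in B^*,|w|\le n\}$. Two details need care.

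\emph{Restriction of counters.} The counter $f_w$ on $\mathcal{A}^*$ restricts on $B^*$ to the counter $f_w$ on $B^*$ when $w\in B^*$, and to $0$ otherwise. This matters only for interpretation; the statement is phrased directly on $B^*$.

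\emph{The case $|B|=1$.} Theorem \ref{theo:subwordcounting} assumes $|\mathcal{A}|\ge 2$, but its proof only used that the alphabet is finite. For $B=\{a\}$, the same induction works with the single generator $a$. Equivalently, $B^*\cong\mathbb{N}$, the polynomial maps of degree $\le n$ are the ordinary polynomials of degree $\le n$ in $N_a$, and these are spanned by $[x]_{a^k}=\binom{N_a(x)}{k}$ for $k\le n$. One can also note that if the condition holds for all finite $B$, it holds for the larger sets with $|B|\ge2$, which suffice for sufficiency since the letters in $x_1,\dots,x_{n+1}$ can be enlarged to such a set when $|\mathcal{A}|\ge 2$. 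The case $B=\emptyset$ is trivial.

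The main obstacle is only this bookkeeping around small alphabets and the precise meaning of the restriction. The core argument is the locality of the balanced equation, which appears to be routine.
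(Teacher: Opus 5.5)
Your proof is correct and is essentially the paper's argument. Both directions rest on the same locality observation: the identity defining $\mathcal{P}_n$ involves only finitely many words, and these all lie in $B^*$ for the finite set $B$ of letters they use. The paper phrases this with the iterated mixed differences $D_{s_1}\cdots D_{s_{n+1}}f(x)$, while you use the equivalent $(n+1)$-balanced Cauchy equation, which spares you the evaluation point and the left/right bookkeeping. Your treatment of $|B|\le 1$ is a small genuine improvement: the paper applies Theorem \ref{theo:subwordcounting} to every finite $B$, although that theorem is stated only for $|\mathcal{A}|\ge 2$.
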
 

\begin{proof}
$(\implies)$ Assume $f \in \mathcal{P}_n(\mathcal{A}^*, \mathbb{C})$. Let $B$ be an arbitrary finite sub-alphabet of $\mathcal{A}$. Consider the restriction $f|_{B^*}: B^* \to \mathbb{C}$. Since $f$ satisfies the Fréchet difference condition $D_{s_{1}} \dots D_{s_{n+1}}(f) = 0$ for all choice of  $D_{s_i} \in\{L_{s_i},R_{s_i}\}$ and $s_i\in  \mathcal{A}^*$, it must satisfy this condition in particular when the steps $s_i$ are restricted to the sub-monoid $B^* \subset \mathcal{A}^*$. Thus, the restricted function $f|_{B^*}$ satisfies the definition of a polynomial map of degree $\le n$ on the free monoid $B^*$. Because $B$ is a finite alphabet,  Theorem \ref{theo:subwordcounting} applies directly to $B^*$, meaning $f|_{B^*}$ must be a finite linear combination of subword counters over $B^*$:$$f|_{B^*}(x) = \sum_{\substack{w \in B^* \\ |w| \le n}} c_w^{(B)} [x]_w \quad \forall x \in B^*.$$

$(\impliedby)$ Assume that for every finite sub-alphabet $B \subset \mathcal{A}$, the restriction $f|_{B^*}$ belongs to $\text{span}_\mathbb{C}\{f_w : w \in B^*, |w| \le n\}$. We must show that $f$ is globally a polynomial map of degree $\le n$ on $\mathcal{A}^*$. Let $s_1, \dots, s_{n+1} \in \mathcal{A}^*$ be any choice of $n+1$ step words, and let $x \in \mathcal{A}^*$ be an arbitrary evaluation point. Collectively, the words $\{s_1, \dots, s_{n+1}, x\}$ contain only a finite number of distinct letters from $\mathcal{A}$. Let $B_{\text{local}} \subset \mathcal{A}$ be the finite sub-alphabet consisting exactly of the letters appearing in these specific words. By construction, $s_1, \dots, s_{n+1}, x \in B_{\text{local}}^*$. Now, let us evaluate the global difference operation at $x$:

$$D_{s_{1}} \dots D_{s_{n+1}}f(x).$$

Since all inputs belong to $B_{\text{local}}^*$, this evaluation depends purely on the values of $f$ inside the domain $B_{\text{local}}^*$. Thus, it is identical to evaluating the difference operator on the restricted function:

$$D_{s_{1}} \dots D_{s_{n+1}}f(x) = D_{s_{1}} \dots D_{s_{n+1}}(f|_{B_{\text{local}}^*})(x).$$

By our hypothesis, $f|_{B_{\text{local}}^*}$ is a finite linear combination of subword counters $f_w$ over $B_{\text{local}}^*$ of length $\le n$. Because every subword counter $f_w$ of length $\le n$ is  annihilated by any sequence of $n+1$ difference operators, the linear combination is also annihilated:
$$D_{s_{1}} \dots D_{s_{n+1}}f(x) = D_{s_{1}} \dots D_{s_{n+1}}(f|_{B_{\text{local}}^*})(x)=0.$$
Therefore, $D_{s_{1}} \dots D_{s_{n+1}}f(x)=0$. Since this holds for any choice of steps $s_i$ and any word $x$, $f$ is a global polynomial map of degree $\le n$ on $\mathcal{A}^*$. 
 \end{proof}

Theorem \ref{theo:infinitealphabets} admits a nice description in terms of projective limits: Let $\mathcal{F}_{\mathcal{A}}$ be the family of all finite subsets of the arbitrary alphabet $\mathcal{A}$. We turn $\mathcal{F}_{\mathcal{A}}$ into a directed set by ordering it via standard set inclusion: $$B \le C \iff B \subseteq C.$$ If $B \subseteq C$, then their generated free monoids satisfy $B^* \subseteq C^*$. Now, for each finite alphabet $B \in \mathcal{F}_{\mathcal{A}}$, we associate the vector space of its polynomial maps $\mathcal{P}_n(B^*, \mathbb{C})$. Whenever $B \subseteq C$, there exists a natural restriction mapping $\pi_{CB}: \mathcal{P}_n(C^*, \mathbb{C}) \to \mathcal{P}_n(B^*, \mathbb{C})$ defined by simply restricting the domain of a polynomial map from $C^*$ to $B^*$:$$\pi_{CB}(g) = g|_{B^*}.$$ 
These restriction maps are linear and satisfy the compatibility conditions required for a projective system: $\pi_{BB} = \text{id}_{\mathcal{P}_n(B^*, \mathbb{C})}$, and  $\pi_{CB} \circ \pi_{DC} = \pi_{DB}$ for all $B \subseteq C \subseteq D$. The projective limit (or inverse limit) of this system, denoted $\varprojlim_{B \in \mathcal{F}_{\mathcal{A}}} \mathcal{P}_n(B^*, \mathbb{C})$, is defined as the subspace of the direct product $\prod_{B \in \mathcal{F}_{\mathcal{A}}} \mathcal{P}_n(B^*, \mathbb{C})$ consisting of all compatible families of functions. That is, a family $(g_B)_{B \in \mathcal{F}_{\mathcal{A}}}$ belongs to the projective limit if and only if for every inclusion $B \subseteq C$:$$g_C|_{B^*} = g_B,$$ and Theorem \ref{theo:infinitealphabets} proves a canonical isomorphism:$$\mathcal{P}_n(\mathcal{A}^*, \mathbb{C}) \cong \varprojlim_{B \in \mathcal{F}_{\mathcal{A}}} \mathcal{P}_n(B^*, \mathbb{C}).$$


The following theorem reduces the study of the equation 
\[
\mathcal{SP}(S,\mathbb{C})=\mathcal{P}(S,\mathbb{C})
\]
to the monoid of words $S=\mathcal{A}^*$:

\begin{theorem}\label{teo:principal}
Assume that
\begin{equation}\label{alfabeto}
\mathcal{SP}(\mathcal{A}^*,\mathbb{C})=\mathcal{P}(\mathcal{A}^*,\mathbb{C})
\end{equation}
holds true for arbitrary alphabets $\mathcal{A}$. Then 
\[
\mathcal{SP}(S,\mathbb{C})=\mathcal{P}(S,\mathbb{C}).
\]
for every monoid $S$.
\end{theorem}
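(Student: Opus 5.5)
Take $\mathcal{A}=S$ as an alphabet (each element of $S$ is a letter) and let $\pi:\mathcal{A}^*\to S$ be the canonical evaluation homomorphism, $\pi(s_1s_2\cdots s_k)=s_1\cdot s_2\cdots s_k$, with $\pi(\epsilon)=e$. It is a surjective monoid homomorphism. Given $f\in\mathcal{SP}_m(S,\mathbb{C})$, I would consider the pullback $F=f\circ\pi:\mathcal{A}^*\to\mathbb{C}$. By hypothesis \eqref{alfabeto}, if $F$ is a semipolynomial on $\mathcal{A}^*$, then $F\in\mathcal{P}_n(\mathcal{A}^*,\mathbb{C})$ for some $n$. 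I would then push this back to $S$ to conclude $f\in\mathcal{P}_n(S,\mathbb{C})$. The reverse inclusion $\mathcal{P}\subseteq\mathcal{SP}$ is trivial.

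The first key step is compatibility of the difference operators with $\pi$. For $h\in\mathcal{A}^*$ and $x\in\mathcal{A}^*$ we have
\[
R_h(f\circ\pi)(x)=f(\pi(x)\pi(h))-f(\pi(x))=(R_{\pi(h)}f)(\pi(x)),
\]
and similarly $L_h(f\circ\pi)=(L_{\pi(h)}f)\circ\pi$. By induction, any word $D_1\cdots D_k$ with $D_i\in\{R_{h_i},L_{h_i}\}$ satisfies
\[
D_1\cdots D_k(f\circ\pi)=(D'_1\cdots D'_k f)\circ\pi,
\]
where $D'_i$ is the same-sided operator with step $\pi(h_i)$.

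Semipolynomiality of $F$ then follows. Take all steps equal to a single $h$; the corresponding steps on $S$ all equal $\pi(h)$. Since $f\in\mathcal{SP}_m(S,\mathbb{C})$, every mixed product of $m+1$ operators from $\{R_{\pi(h)},L_{\pi(h)}\}$ annihilates $f$, hence the corresponding product annihilates $F$. So $F\in\mathcal{SP}_m(\mathcal{A}^*,\mathbb{C})$, and the hypothesis gives $F\in\mathcal{P}_n(\mathcal{A}^*,\mathbb{C})$ for some $n$. (It does not matter that this $n$ need not be uniform in $f$.)

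The descent step goes as follows. Given $s_1,\dots,s_{n+1}\in S$ and $x\in S$, choose preimages: the one-letter words $s_i\in\mathcal{A}^*$, and the one-letter word $x$, or $\epsilon$ if $x=e$, so that $\pi$ maps each to the corresponding element of $S$. Then
\[
(D'_1\cdots D'_{n+1}f)(x)=D_1\cdots D_{n+1}F(\tilde x)=0
\]
by surjectivity of $\pi$. Hence $f\in\mathcal{P}_n(S,\mathbb{C})$.

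The main obstacle is conceptual rather than technical. One must check that the definition of semipolynomial uses a single common step $h$ across mixed left and right operators, and that this is preserved under pullback. It is, since $\pi$ sends a common step to a common step. One must also ensure the alphabet may be arbitrary, possibly infinite or uncountable; this is exactly why the hypothesis is stated for arbitrary $\mathcal{A}$. Theorem~\ref{theo:infinitealphabets} then describes what such $F$ look like, but it is not needed for the argument.
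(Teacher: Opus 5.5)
Your proposal is correct and follows essentially the same route as the paper: pull $f$ back along a surjective homomorphism $\pi:\mathcal{A}^*\to S$, use $D_u(f\circ\pi)=(D_{\pi(u)}f)\circ\pi$ to see that $f\circ\pi$ is a semipolynomial, invoke the hypothesis, and descend to $S$ by surjectivity. The only minor difference is in the descent. You check the defining mixed-operator condition directly at preimages of the steps and of the point. The paper instead passes through the $(N+1)$-balanced Cauchy equation, which relies on the Section~1 identity $\mathcal{P}_N=\mathcal{P}_N^R$, so your version avoids that dependency.
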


\begin{proof}
The inclusion
\[
\mathcal{P}(S,\mathbb{C})\subseteq \mathcal{SP}(S,\mathbb{C})
\]
is immediate from the definitions, so it suffices to prove the reverse inclusion. Let $f\in \mathcal{SP}(S,\mathbb{C})$. Then $f\in \mathcal{SP}_m(S,\mathbb{C})$ for certain $m\geq 0$.

Since every monoid is a quotient of a free monoid, there exist an
alphabet $\mathcal{A}$ and a surjective monoid homomorphism
\[
\pi:\mathcal{A}^*\to S.
\]

Define
\[
\widetilde f:=f\circ \pi:\mathcal{A}^*\to \mathbb{C}.
\]

We claim that $\widetilde f\in \mathcal{SP}_m(\mathcal{A}^*,\mathbb{C})$.
Indeed, let $u\in \mathcal{A}^*$ and let
$D_u\in\{L_u,R_u\}$.
Since $\pi$ is a homomorphism,
\[
D_u(\widetilde f)
=
(D_{\pi(u)}f)\circ\pi.
\]
Thus, we can use induction to prove that, if $D_1,\dots,D_{m+1}\in \{L,R\}$, we have that
\begin{eqnarray*}
    (D_1)_u(D_2)_u\cdots (D_{m+1})_u(\widetilde{f})(x) &=& 
    (D_1)_u((D_2)_u\cdots (D_{m+1})_u(\widetilde{f}))(x)\\
&=&    (D_1)_u((D_2)_{\pi(u)}\cdots (D_{m+1})_{\pi(u)}(f)\circ \pi)(x)\\ 
&=&    ((D_1)_{\pi(u)}(D_2)_{\pi(u)}\cdots (D_{m+1})_{\pi(u)}(f)\circ \pi)(x)\\ 
\end{eqnarray*}
Since $f\in \mathcal{SP}_m(S,\mathbb{C})$, the right-hand side vanishes identically.
Hence $\widetilde f\in \mathcal{SP}_m(\mathcal{A}^*,\mathbb{C})\subseteq \mathcal{SP}(\mathcal{A}^*,\mathbb{C})$. Now, by hypothesis, $\mathcal{SP}(\mathcal{A}^*,\mathbb{C})=\mathcal{P}(\mathcal{A}^*,\mathbb{C})$. Hence  $f\in \mathcal{P}_N(\mathcal{A}^*,\mathbb{C})$ for certain $N\geq m$.

Now we use the characterization of polynomial maps by the
$(N+1)$-balanced Cauchy equation.
Since $\widetilde f\in \mathcal{P}_N(\mathcal{A}^*,\mathbb{C})$, for every
$u_1,\dots,u_{N+1}\in \mathcal{A}^*$,
\[
\sum_{T\subseteq \{1,\dots,N+1\}}
(-1)^{N+1-|T|}
\widetilde f
\!\left(
\prod_{i\in T}u_i
\right)
=0.
\]

Substituting $\widetilde f=f\circ\pi$ and using that $\pi$ is a
homomorphism, we obtain
\[
\sum_{T\subseteq \{1,\dots,N+1\}}
(-1)^{N+1-|T|}
f
\!\left(
\prod_{i\in T}\pi(u_i)
\right)
=0.
\]

Let now $s_1,\dots,s_{N+1}\in S$ be arbitrary.
Since $\pi$ is surjective, there exist
$u_1,\dots,u_{N+1}\in \mathcal{A}^*$ such that
\[
\pi(u_i)=s_i,
\qquad i=1,\dots,N+1.
\]
Hence
\[
\sum_{T\subseteq \{1,\dots,N+1\}}
(-1)^{N+1-|T|}
f
\!\left(
\prod_{i\in T}s_i
\right)
=0.
\]

Since the elements $s_1,\dots,s_{N+1}$ were arbitrary,
$f$ satisfies the $(N+1)$-balanced Cauchy equation on $S$.
By the characterization of polynomial maps,
\[
f\in \mathcal{P}_N(S,\mathbb{C}).
\]

Therefore
\[
\mathcal{SP}(S,\mathbb{C})\subseteq \mathcal{P}(S,\mathbb{C}).
\]
Combining this with the obvious reverse inclusion yields
\[
\mathcal{SP}(S,\mathbb{C})=\mathcal{P}(S,\mathbb{C}).
\]
\end{proof}

The result above lead us to propose the following: 

\noindent \textbf{Open Problem:} Prove that equality \eqref{alfabeto} holds for arbitrary alphabets, or find a counterexample. If we prove this, then condition $S\in\mathcal{CS}$ in Shulman's Theorem would be superfluous. If we find a counterexample, we would get a proof that the condition is completely necessary.

\subsection{The algebra $\mathbb{C}\langle \mathcal{A}\rangle$ }
 We have shown, for finite alphabets $\mathcal{A}$ of size at least $2$, that every polynomial map $f\in \mathcal{P}(\mathcal{A}^*,\mathbb{C})$ 
 is a linear combination of counter word maps $f_w(x)=[x]_w$, where $w\in \mathcal{A}^*$ is any word that satisfies $|w|\leq \operatorname{fdeg}(f)$. Moreover, $\operatorname{fdeg}(f_w)=|w|$ and, from Section \ref{pointwiseproduct}, we also know that the product of polynomial maps is again a polynomial map, that satisfies $\operatorname{fdeg}(f\cdot g)\leq \operatorname{fdeg}(f)+\operatorname{fdeg}(g)$. 
We give now an algorithm that recursively computes the decomposition of $f_{w_1}\cdot f_{w_2}$ as a linear combination of counter word maps, and use it to prove that  \[
f_{ab} \cdot f_{ba} = 2f_{abba} + f_{abab} + 2f_{baab} + f_{baba} + f_{aba} + f_{bab}
\] 
whenever $a,b\in\mathcal{A}$, $a\neq b$.

Indeed, let $\mathcal{A}$ be any alphabet of size at least $2$, and let $x = x_1 x_2 \dots x_n \in \mathcal{A}^*$ be a word of length $n$. By definition, the subword counter function $f_w(x) = [x]_w$ is the cardinality of the set of strictly increasing index tuples that spell out $w$:
\[
f_w(x) = \# \left\{ (i_1, i_2, \dots, i_{|w|}) \;\middle|\; 1 \le i_1 < i_2 < \dots < i_{|w|} \le n \text{ and } x_{i_k} = w_k \text{ for all } k \right\}.
\]
Given any tuple of strictly increasing indices $I = (i_1, i_2, \dots, i_{|w_1|})$ with $1\leq i_r\leq n$ for all $1\leq r\leq |w_1|$, and $x=x_1x_2\dots x_n\in \mathcal{A}^*$, we use the notation $x_I =x_{i_1}x_{i_2}\dots x_{i_{|w_1|}}$. 
When evaluating the product $f_{w_1}(x) \cdot f_{w_2}(x)$, we simultaneously select two independent index sequences embedded along the timeline of $x$:
\begin{itemize}
    \item A tuple of strictly increasing indices $I = (i_1, i_2, \dots, i_{|w_1|})$ such that $x_I = w_1$.
    \item A tuple of strictly increasing indices $J = (j_1, j_2, \dots, j_{|w_2|})$ such that $x_J = w_2$.
\end{itemize}
The evaluation of the product maps bijectively to counting the total number of valid tuple pairs $(I, J)$. To decompose this product into a single linear combination, these independent sequences must be unified into a single merged, strictly increasing index tuple $K = I \cup J = (k_1, k_2, \dots, k_{|u|})$. When forming this union, the relative alignment of individual coordinates splits into two cases:

\begin{itemize}
    \item Disjoint Interleaving: If an index choice $i_r \in I$ and $j_s \in J$ point to distinct positions in $x$ ($i_r \neq j_s$), they maintain separate positions in the unified tuple $K$. For instance, if $w_1 = a$ and $w_2 = b$, selecting distinct indices yields either $i_1 < j_1$ (generating the subsequence $ab$) or $j_1 < i_1$ (generating the subsequence $ba$).
    \item Contraction via Overlap: If a letter required at a given position of $w_1$ matches the letter required at a given position of $w_2$, the respective choices can target the identical coordinate location within $x$ ($i_r = j_s$). Upon taking the union $I \cup J$, these identical positions contract into a single coordinate in $K$, reducing the absolute length of the output word. For instance, if $w_1 = a$ and $w_2 = a$, targeting the identical position $i_1 = j_1$ yields a merged single-element tuple $(i_1)$, which reads as the contracted subsequence $a$.
\end{itemize}

To uncouple the standard pointwise multiplication of functions from the index prefixing operation, we introduce the left-extension operator $\mathcal{L}_a$ for any letter $a \in \mathcal{A}$. It is a well-defined linear operator acting on the space of arbitrary functions $f: \mathcal{A}^* \to \mathbb{C}$ according to the rule:
\[
\mathcal{L}_a(f)(x) = \begin{cases} f(x') & \text{if } x = a x', \\ 0 & \text{if } x \text{ is empty or does not start with } a. \end{cases}
\]
On the basis of pure word counter maps, this operator simply inserts a letter at the front of the tracking index: $\mathcal{L}_a(f_v) = f_{av}$. This blending can be formalized recursively. Let $w_1 = av_1$ and $w_2 = bv_2$, where $a, b \in \mathcal{A}$ and $v_1, v_2 \in S$. The functional product satisfies:
\begin{equation}\label{eq:recursive_shuffle}
f_{av_1} \cdot f_{bv_2} = \mathcal{L}_a\left( f_{v_1} \cdot f_{bv_2} \right) + \mathcal{L}_b\left( f_{av_1} \cdot f_{v_2} \right) + \delta_{a,b} \, \mathcal{L}_a\left( f_{v_1} \cdot f_{v_2} \right),
\end{equation}
where $\delta_{a,b}$ denotes the Kronecker delta. Note that $\mathcal{L}_a$ is strictly linear but not multiplicative; thus, to evaluate expressions of the form $\mathcal{L}_a(g \cdot h)$, the inner pointwise product $g \cdot h$ must be recursively resolved into a linear combination of single counter maps \emph{before} the outer operator $\mathcal{L}_a$ is distributed across the sum. By repeatedly applying this order of operations, products are systematically driven down in length until they hit the base case involving the empty word function $f_{\epsilon} = 1$, resolving the entire initial product into a finite linear combination of  counter maps.

To illustrate this mechanism, we compute the explicit decomposition of $f_{ab} \cdot f_{ba}$, evaluating the relative order of the initial indices $i_1$ (for $ab$) and $j_1$ (for $ba$):
\begin{itemize}
    \item Case 1 ($i_1 < j_1$): The unified sequence begins with $a$ at index $i_1$, triggering the outer $\mathcal{L}_a$ operator. The remaining task requires computing the product of the suffix $f_b$ with the complete function $f_{ba}$ inside the argument. Applying equation \eqref{eq:recursive_shuffle} to this inner product yields:
    \[
    f_{b} \cdot f_{ba} = \mathcal{L}_b(1 \cdot f_{ba}) + \mathcal{L}_b(f_b \cdot f_a) + \delta_{b,b}\mathcal{L}_b(1 \cdot f_a).
    \]
    Since $1 \cdot f_{ba} = f_{ba}$ and $1 \cdot f_a = f_a$, the inner product simplifies via basic basis steps to $\mathcal{L}_b(f_{ba}) + \mathcal{L}_b(f_b \cdot f_a) + \mathcal{L}_b(f_a)$. Resolving the remaining internal product $ f_a \cdot f_b =\mathcal{L}_a(1\cdot f_{b}) + \mathcal{L}_b(f_{a}\cdot 1) = f_{ab} + f_{ba}$ and substituting it back gives:
    \[
    f_{b} \cdot f_{ba} = f_{bba} + \mathcal{L}_b(f_{ba} + f_{ab}) + f_{ba} = f_{bba} + (f_{bba} + f_{bab}) + f_{ba} = 2f_{bba} + f_{bab} + f_{ba}.
    \]
    Now that the inner product is successfully converted into a linear combination of single counter maps, the outer $\mathcal{L}_a$ operator distributes across the sum:
    \[
    \mathcal{L}_a(2f_{bba} + f_{bab} + f_{ba}) = 2f_{abba} + f_{abab} + f_{aba}.
    \]
    
    \item Case 2 ($j_1 < i_1$): The unified sequence begins with $b$ at index $j_1$, triggering the outer $\mathcal{L}_b$ operator. Symmetrically, computing the inner product of the complete function $f_{ab}$ with the suffix $f_a$ and distributing $\mathcal{L}_b$ across the resulting basis elements produces: 
    \[
    \mathcal{L}_b(2f_{aab} + f_{aba} + f_{ab}) = 2f_{baab} + f_{baba} + f_{bab}.
    \]
    
    \item Case 3 ($i_1 = j_1$): This configuration requires the index to simultaneously satisfy $x_{i_1} = a$ and $x_{j_1} = b$. Because $a \neq b$, the Kronecker delta evaluates to zero ($\delta_{a,b} = 0$), meaning this initial contraction yields no terms.
\end{itemize}

Combining all terms yields the exact decomposition:
\[
f_{ab} \cdot f_{ba} = 2f_{abba} + f_{abab} + 2f_{baab} + f_{baba} + f_{aba} + f_{bab}.
\]
Note that the functional degrees align perfectly with the graded structural bound:
\[
4=\operatorname{fdeg}(f_{ab} \cdot f_{ba}) \le \operatorname{fdeg}(f_{ab}) + \operatorname{fdeg}(f_{ba}) = 2 + 2 ,
\]
where the pure interleaving terms match the maximal degree 4 and the contraction terms produce lower degree terms (of degree 3).

The computations above can be formalized in terms of something named infiltration product, which is defined on the free associative algebra generated by $\mathcal{A}$. Let us introduce the basic definitions and prove the result as a closed, explicit formula. These results are inspired by Vargas' paper \cite{vargas2014}: 
\begin{definition}[Free Associative Algebra with infiltration product]
Let $\mathcal{A}$ be an arbitrary alphabet and $\mathcal{A}^*$ the free monoid of words under concatenation, with $\epsilon$ representing the empty word. We denote by $\mathbb{C}\langle\mathcal{A}\rangle$ the 
$\mathbb{C}$-vector space 
of all finite formal linear combinations of words in $\mathcal{A}^*$:
\[
\mathbb{C}\langle\mathcal{A}\rangle = \left\{ \sum_{w \in \mathcal{A}^*} c_w w : c_w \in \mathbb{C}, \text{ with } c_w = 0 \text{ for all but finitely many } w \right\}.
\]
The \emph{infiltration product} $\uparrow : \mathbb{C}\langle\mathcal{A}\rangle \times \mathbb{C}\langle\mathcal{A}\rangle \to \mathbb{C}\langle\mathcal{A}\rangle$ is defined recursively for all letters $a, b \in \mathcal{A}$ and words $u', v' \in \mathcal{A}^*$ by:
\begin{align*}
\epsilon \uparrow u &= u, \quad u \uparrow \epsilon = u, \\
(au') \uparrow (bv') &= a(u' \uparrow bv') + b(au' \uparrow v') + \delta_{a,b} a(u' \uparrow v'),
\end{align*}
where $\delta_{a,b}$ is the Kronecker delta ($\delta_{a,b} = 1$ if $a=b$, and $0$ otherwise). Combinatorially, for two words $u, v \in \mathcal{A}^*$, their infiltration expands as a finite formal linear combination:
\[
u \uparrow v = \sum_{w \in \mathcal{A}^*} \binom{w}{u, v}_{\!\uparrow} w,
\]
where the coefficient $\binom{w}{u, v}_{\!\uparrow} \in \mathbb{N}$ denotes the number of pairs of embedding injections mapping $u$ and $v$ as scattered subwords into $w$ such that their images completely cover all letter positions of $w$.
\end{definition}
\begin{definition}[Linear Extension of Subword Counters]
For each word $w \in \mathcal{A}^*$, let $f_w: \mathcal{A}^* \to \mathbb{C}$ be the subword counting map defined by $f_w(x) = [x]_w$. We extend this mapping linearly to the entire algebra $\mathbb{C}\langle\mathcal{A}\rangle$. Specifically, for any formal polynomial $z = \sum_{w \in \mathcal{A}^*} c_w w \in \mathbb{C}\langle\mathcal{A}\rangle$, the function $f_z: \mathcal{A}^* \to \mathbb{C}$ is defined by:
\[
f_z(x) = f_{\sum c_w w}(x) = \sum_{w \in \mathcal{A}^*} c_w f_w(x) = \sum_{w \in \mathcal{A}^*} c_w [x]_w \quad \forall x \in \mathcal{A}^*.
\]
\end{definition}

\begin{proposition}
Let $u, v \in \mathcal{A}^*$ be two fixed words, and let $f_u, f_v \in \mathcal{P}(\mathcal{A}^*, \mathbb{C})$ be their corresponding subword counting maps, defined by $f_u(x) = [x]_u$. Then the pointwise product of these polynomial maps satisfies the linearization identity:
\begin{equation}\label{eq:infiltration_linearization}
f_u(x) \cdot f_v(x) = [x]_u [x]_v = \sum_{w \in \mathcal{A}^*} \binom{w}{u, v}_{\!\uparrow} [x]_w = f_{u \uparrow v}(x) \quad \forall x \in \mathcal{A}^*.
\end{equation}
\end{proposition}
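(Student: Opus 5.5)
The plan is a direct bijective count, using the combinatorial description of the coefficients $\binom{w}{u,v}_{\!\uparrow}$ given in the definition of the infiltration product. Fix $x=x_1\cdots x_n\in\mathcal{A}^*$. The left-hand side $[x]_u[x]_v$ is the number of pairs $(I,J)$, where $I$ is a strictly increasing $|u|$-tuple of positions of $x$ with $x_I=u$, and $J$ is a strictly increasing $|v|$-tuple with $x_J=v$. I will partition this set of pairs according to the union $K=I\cup J$, viewed as a strictly increasing tuple $(k_1,\dots,k_m)$, and set $w=x_K$. The goal is to show that the pairs with a given $K$ are in bijection with pairs of embeddings of $u$ and $v$ into the word $w$ whose images cover all positions of $w$.

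\textbf{Step 1 (forward map).} Let $\sigma:\{1,\dots,m\}\to K$ be the order isomorphism $r\mapsto k_r$. Then $\sigma^{-1}(I)$ and $\sigma^{-1}(J)$ are increasing tuples in $\{1,\dots,m\}$. They spell $u$ and $v$ inside $w$, because $w_r=x_{k_r}$. Their union is all of $\{1,\dots,m\}$ since $K=I\cup J$. So each pair $(I,J)$ determines a triple $(K,\alpha,\beta)$, where $K$ is an increasing tuple of positions of $x$ (equivalently, an occurrence of $w=x_K$ in $x$) and $(\alpha,\beta)$ is a covering pair of embeddings of $u,v$ into $w$.

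\textbf{Step 2 (inverse map).} Conversely, a triple $(K,\alpha,\beta)$ gives back $(\sigma(\alpha),\sigma(\beta))$, and these spell $u$ and $v$ in $x$. The two constructions are mutually inverse, because the covering condition forces $\sigma(\alpha)\cup\sigma(\beta)=K$.

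\textbf{Step 3 (conclusion).} Summing over $w$, the number of triples equals $\sum_w [x]_w\binom{w}{u,v}_{\!\uparrow}$. The sum is finite, since covering forces $\max(|u|,|v|)\le|w|\le|u|+|v|$. This proves \eqref{eq:infiltration_linearization}, and the last equality $=f_{u\uparrow v}(x)$ is just the linear extension of $w\mapsto f_w$.

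The main subtle point is that the paper defines $u\uparrow v$ recursively, while the statement uses the combinatorial coefficients. If one insists on the recursive definition, I would add a lemma proving, by induction on $|u|+|v|$, that the recursion produces the covering-embedding counts. The argument conditions on whether position $1$ of $w$ lies in the image of $\alpha$ only, of $\beta$ only, or of both; the last case requires $a=b$. This matches the three terms of the recursion exactly.

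Alternatively, I would prove \eqref{eq:infiltration_linearization} directly from the recursion \eqref{eq:recursive_shuffle}, by induction on $|u|+|v|$. For that I would decompose a pair $(I,J)$ in $x$ by the relative order of $i_1$ and $j_1$, and use that $f_{a v'}(x)=\sum_{p:\,x_p=a}f_{v'}(x_{p+1}\cdots x_n)$. I expect this bookkeeping, namely conditioning on the smallest index of $K$, to be the only place where care is needed.
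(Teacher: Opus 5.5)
Your proposal is correct and follows essentially the same argument as the paper. You fix $x$, group the pairs $(I,J)$ by their union $K=I\cup J$, identify the pairs with a given $K$ with covering pairs of embeddings of $u$ and $v$ into $w=x_K$, and sum over $w$. Two things make your version somewhat more careful than the paper's: the explicit inverse map, and your remark that the recursive definition of $u\uparrow v$ has to be reconciled with the combinatorial coefficients, which the paper simply asserts in the definition.
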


\begin{proof}
Let $x \in \mathcal{A}^*$ be an arbitrary background word of length $|x| = m$, indexed by its distinct slot positions $\{1, 2, \dots, m\}$. By definition, the product $[x]_u [x]_v$ counts the number of pairs $(\mathcal{I}, \mathcal{J})$ where:
\begin{itemize}
    \item $\mathcal{I} = \{i_1 < i_2 < \dots < i_{|u|}\}$ is a set of strictly increasing indices in $\{1, \dots, m\}$ selecting letters that spell out the word $u$.
    \item $\mathcal{J} = \{j_1 < j_2 < \dots < j_{|v|}\}$ is a set of strictly increasing indices in $\{1, \dots, m\}$ selecting letters that spell out the word $v$.
\end{itemize}

Every such pair of index sets $(\mathcal{I}, \mathcal{J})$ isolates a subset of slots in $x$ given by their union $\mathcal{K} = \mathcal{I} \cup \mathcal{J}$. Let us group all valid pairs $(\mathcal{I}, \mathcal{J})$ according to their specific union set $\mathcal{K}$. Let $k = |\mathcal{K}|$. The indices in $\mathcal{K}$ point to a specific, unique subsequence of $x$ which forms a word string $w = x_{k_1} x_{k_2} \dots x_{k_k} \in \mathcal{A}^*$ of length $k$. 

By construction, within this extracted word $w$:
\begin{enumerate}
    \item The indices belonging to $\mathcal{I}$ map to a valid scattered subword copy of $u$ inside $w$.
    \item The indices belonging to $\mathcal{J}$ map to a valid scattered subword copy of $v$ inside $w$.
    \item Because $\mathcal{K} = \mathcal{I} \cup \mathcal{J}$, every single slot position of the word $w$ is visited by at least one of the two subword images. Thus, the images of $u$ and $v$ form a total covering of $w$.
\end{enumerate}

By the combinatorial definition of the infiltration product coefficients, the number of ways that the two words $u$ and $v$ can be embedded into a fixed word string $w$ to cover it completely is precisely the coefficient $\binom{w}{u, v}_{\!\uparrow}$. 

To find the total number of index configurations $(\mathcal{I}, \mathcal{J})$ across the entire word $x$, we partition the count by first looping over every possible word template $w \in \mathcal{A}^*$. For each word $w$, we count how many times $w$ itself appears as a scattered subword inside $x$ (which is exactly $[x]_w$), and multiply it by the number of internal ways $u$ and $v$ can cover that $w$:
\[
[x]_u [x]_v = \sum_{w \in \mathcal{A}^*} \binom{w}{u, v}_{\!\uparrow} [x]_w.
\]
Since only words $w$ whose lengths fall within the absolute bounds $\max(|u|, |v|) \le |w| \le |u| + |v|$ can have non-zero coefficients $\binom{w}{u, v}_{\!\uparrow}$, the sum is guaranteed to be finite. Extending the subword counting map $f$ linearly over formal linear combinations of words inside the group algebra yields:
\[
\sum_{w \in \mathcal{A}^*} \binom{w}{u, v}_{\!\uparrow} [x]_w = f_{\sum \binom{w}{u,v}_{\!\uparrow} w}(x) = f_{u \uparrow v}(x).
\]
This completes the proof.
\end{proof}

As a consequence, we obtain a direct proof of \eqref{fdegproduct} for polynomial maps $f,g\in\mathcal{P}(\mathcal{A}^*,\mathbb{C})$ for arbitrary alphabets $\mathcal{A}$:
\begin{corollary}
    Let $\mathcal{A}$ be any alphabet and assume that  $f,g\in\mathcal{P}(\mathcal{A}^*,\mathbb{C})$. Then $\operatorname{fdeg}(f\cdot g)\leq \operatorname{fdeg}(f)+\operatorname{fdeg}(g)$.
\end{corollary}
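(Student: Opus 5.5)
The plan is to reduce to finite sub-alphabets and then read the degree bound off the infiltration identity \eqref{eq:infiltration_linearization}. I will not use the general arguments of Section \ref{pointwiseproduct}.

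First, suppose $\mathcal{A}$ is finite with $|\mathcal{A}|\ge 2$. Let $d_1=\operatorname{fdeg}(f)$ and $d_2=\operatorname{fdeg}(g)$. By Theorem \ref{theo:subwordcounting}, $\mathcal{P}_n(\mathcal{A}^*,\mathbb{C})=\operatorname{span}\{f_w:|w|\le n\}$, so we can write
\[
f=\sum_{|u|\le d_1}c_u f_u,\qquad g=\sum_{|v|\le d_2}c'_v f_v .
\]
Bilinearity of the pointwise product and the Proposition then give
\[
f\cdot g=\sum_{u,v}c_uc'_v\, f_{u\uparrow v}.
\]
The coefficient $\binom{w}{u,v}_{\uparrow}$ vanishes unless $|w|\le |u|+|v|$, because two embeddings covering $w$ use at most $|u|+|v|$ positions. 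Hence every $f_{u\uparrow v}$ is a combination of counters $f_w$ with $|w|\le d_1+d_2$. Each such counter has $\operatorname{fdeg}(f_w)=|w|$, so it lies in $\mathcal{P}_{d_1+d_2}$, and since $\mathcal{P}_{d_1+d_2}$ is a vector space, $f\cdot g$ lies there too.

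The degenerate cases need a short separate remark. If $f$ or $g$ is identically $0$, the convention $-\infty$ makes the inequality trivial. If $|\mathcal{A}|\le 1$, then either $\mathcal{A}^*$ is trivial or $\mathcal{A}^*\cong\mathbb{N}$. In the latter case the same counting argument works directly: the polynomial maps are the $\binom{x}{k}$, which are exactly the counters $f_{a^k}$, and the Proposition's proof never used $|\mathcal{A}|\ge2$. Alternatively, one can embed $\{a\}^*$ into $\{a,b\}^*$ and restrict, as in the next step.

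For an arbitrary, possibly infinite, alphabet I use Theorem \ref{theo:infinitealphabets}. Set $n=d_1+d_2$. For each finite $B\subseteq\mathcal{A}$, the restrictions $f|_{B^*}$ and $g|_{B^*}$ lie in $\mathcal{P}_{d_1}(B^*,\mathbb{C})$ and $\mathcal{P}_{d_2}(B^*,\mathbb{C})$ respectively. This holds because the difference operators with steps in $B^*$ are just the restricted ones, which is the forward direction of that theorem. If $|B|<2$, I enlarge $B$ to a finite set with at least two letters, or use the unary remark above. The finite case then gives $(f\cdot g)|_{B^*}=f|_{B^*}\cdot g|_{B^*}\in\mathcal{P}_n(B^*,\mathbb{C})$ for every finite $B$. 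The reverse direction of Theorem \ref{theo:infinitealphabets} yields $f\cdot g\in\mathcal{P}_n(\mathcal{A}^*,\mathbb{C})$.

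I expect no real obstacle here. The one point needing care is the length bound on the support of $u\uparrow v$. It should follow from the combinatorial description, or by induction on $|u|+|v|$ from the recursive definition, since each recursive step prepends one letter while consuming at least one letter from $u$ or $v$. The bookkeeping for small alphabets is also needed, so that Theorem \ref{theo:subwordcounting} applies on each finite $B$.
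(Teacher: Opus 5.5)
Your proposal is correct and is essentially the paper's proof. On finite alphabets you expand $f$ and $g$ in subword counters via Theorem~\ref{theo:subwordcounting} and read the bound off the infiltration identity \eqref{eq:infiltration_linearization} using $|w|\le|u|+|v|$; for arbitrary alphabets you restrict to finite sub-alphabets and apply Theorem~\ref{theo:infinitealphabets}. Your remarks on the zero function and on sub-alphabets with fewer than two letters fill in details the paper leaves implicit.
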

\begin{proof}
    If $\mathcal{A}$ is finite, then every polynomial map of functional degree $\leq n$ is a linear combination of counter words $f_w$ with words $w$ of size $|w|\leq n$. The result follows, in this case, by applying \eqref{eq:infiltration_linearization}. If the alphabet is not finite, we can use the above with $f_{|\mathcal{B}^*}$, and $g_{|\mathcal{B}^*}$ for the finite subalphabets $\mathcal{B}$ of $\mathcal{A}$, and apply Theorem \ref{theo:infinitealphabets}. This completes the proof. 
\end{proof}

\subsection{An application: patterns of permutations}

The counting word maps $f_w$ appear in a natural way also in the study of patterns of arbitrary permutations: Let  $\mathfrak{S}_n$ be the set of permutations $\sigma$ of size $|\sigma|=n$ (i.e., permutations of $\{1,\cdots,n\})$. The elements of  $\mathfrak{S}_n$ can be seen as words of size $n$, with no repeated elements, on the alphabet $\{1,\dots,n\}$ (so that, $\sigma\in\mathfrak{S}_n$ is writen as $\sigma_1\sigma_2\cdots\sigma_n:=\sigma(1)\sigma(2)\cdots\sigma(n)\in\{1,2,\dots,n\}^*$). Let $\mathfrak{S} = \bigcup_{n \ge 0} \mathfrak{S}_n$ be the set of all permutations.

Let $w \in \mathfrak{S}_m$ and $\sigma \in \mathfrak{S}_n$ with $n \le m$. The permutation $\sigma$ is said to be a pattern of $w$ if there exists a strictly increasing sequence of indices $1 \le i_1 < i_2 < \dots < i_n \le m$ such that the subsequence of entries $w_{i_1} w_{i_2} \dots w_{i_n}$ is order-isomorphic to $\sigma$. That is, for all $j, k \in \{1, \dots, n\}$:
\[
w_{i_j} < w_{i_k} \iff \sigma_j < \sigma_k
\]
The permutation pattern function $p_\sigma: \mathfrak{S} \to \mathbb{C}$ maps every permutation $w$ to the total number of distinct occurrences of $\sigma$ as a pattern within $w$:
\[
p_\sigma(w) = \left\{\begin{matrix} w \\ \sigma \end{matrix}\right\}= \# \{ \text{occurrences of } \sigma \text{ as a pattern in } w \}.
\]
By convention, for the empty permutation, $\left\{\begin{matrix} w \\ \epsilon \end{matrix}\right\} = 1$ for all $w \in \mathfrak{S}$.

Let us now connect permutations with words in a general alphabet.  Let $\mathcal{A}$ be a totally ordered alphabet and $\mathcal{A}^*$ denote the free monoid of words over $\mathcal{A}$ under concatenation, with $\epsilon$ representing the empty word. 
The standardization of a word $w = a_1 a_2 \dots a_n \in \mathcal{A}^*$ is defined as the unique permutation $\text{st}(w) \in \mathfrak{S}_n$ satisfying for all $1 \le i, j \le n$:
\[
\text{st}(w)_i < \text{st}(w)_j \iff a_i < a_j \quad \text{or} \quad (a_i = a_j \text{ and } i < j).
\]
For a word $w \in \mathcal{A}^*$, let $f_w: \mathcal{A}^* \to \mathbb{N}$ denote the  subword counting map. Moreover, for a permutation pattern $\sigma \in \mathfrak{S}$, let $p_\sigma^*: \mathcal{A}^* \to \mathbb{C}$ be the pattern counting function extended to words, defined by:
\[
p_\sigma^*(x) = \left| \{ u \subseteq x : \text{st}(u) = \sigma \} \right|
\]
where $u \subseteq x$ means $u$ is a subword of $x$.

\begin{theorem} Let $\sigma\in\mathfrak{S}$ be a permutation. The following holds: 
    \begin{enumerate}
\item  $p_{\sigma}^*$ is a natural generalization of the  pattern counting function $p_\sigma: \mathfrak{S} \to \mathbb{C}$. 

\item For every alphabet $\mathcal{A}$, \begin{equation}\label{equ:decompo}
  p_{\sigma}^* = \sum_{\substack{v \in \mathcal{A}^* \\ \text{st}(v) = \sigma}} f_v.  
\end{equation}
\item  $p_{\sigma}^*$ is a polynomial map. 
\item If $w \in \mathcal{A}^n$ consists of distinct, strictly increasing characters ($a_1 < a_2 < \dots < a_n$) and $x \in \mathcal{A}^m$ is also a strictly increasing word of distinct characters ($m \ge n$), then $p_{\text{st}(w)}^*(x) = f_w(x)$.
\item $\operatorname{fdeg}(p_{\sigma}^*)\leq |\sigma|$.
\end{enumerate}
\end{theorem}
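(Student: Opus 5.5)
The plan is to establish item (2) first, since items (3) and (5) follow from it together with the earlier results on subword counters. Items (1) and (4) are direct verifications.

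\textbf{Item (1).} Take $\mathcal{A}=\{1,\dots,m\}$ with its natural order and regard a permutation $w\in\mathfrak{S}_m$ as a word over $\mathcal{A}$ with distinct letters. For any subword $u=w_{i_1}\cdots w_{i_n}$ the letters are distinct, so $\operatorname{st}(u)$ is simply the order-pattern of $u$. Hence $\operatorname{st}(u)=\sigma$ holds exactly when the index tuple $(i_1,\dots,i_n)$ is an occurrence of $\sigma$ as a pattern in $w$. This gives $p_\sigma^*(w)=p_\sigma(w)$, with the empty pattern matching the convention $=1$.

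\textbf{Item (2).} Read ``$u\subseteq x$'' as ranging over index tuples $I=(i_1<\dots<i_n)$, counted with multiplicity, as in the definition of $f_v$. Group these tuples according to the word $x_I$. For a fixed $v$, the number of tuples with $x_I=v$ is $[x]_v=f_v(x)$, and such a tuple contributes to $p_\sigma^*(x)$ if and only if $\operatorname{st}(v)=\sigma$. Summing over $v$ gives \eqref{equ:decompo}.

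The sum in \eqref{equ:decompo} may be infinite when $\mathcal{A}$ is infinite. Pointwise this causes no trouble: for a fixed $x$, only words $v$ of length $|\sigma|$ whose letters occur in $x$ contribute, and there are finitely many of them. So \eqref{equ:decompo} is understood as a pointwise-finite sum.

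\textbf{Item (3) and item (5).} This is where the only real subtlety lies. If $\mathcal{A}$ is finite, \eqref{equ:decompo} is a finite combination of $f_v$ with $|v|=|\sigma|$. By Theorem~\ref{theo:subwordcounting} each such $f_v$ has $\operatorname{fdeg}(f_v)=|\sigma|$; alternatively, the computation of $R_y f_w$ in that proof gives $\operatorname{fdeg}\le|w|$ for any alphabet size, including $|\mathcal{A}|=1$. Since $\mathcal{P}_n$ is closed under finite sums, $p_\sigma^*\in\mathcal{P}_{|\sigma|}$.

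For infinite $\mathcal{A}$ I will use Theorem~\ref{theo:infinitealphabets}. For a finite $B\subseteq\mathcal{A}$, the restriction of $p_\sigma^*$ to $B^*$ equals $\sum_{v\in B^*,\,\operatorname{st}(v)=\sigma} f_v$, which is a finite sum; note that $\operatorname{st}$ computed in $B$ agrees with $\operatorname{st}$ computed in $\mathcal{A}$, since $B$ carries the induced order. So the restriction lies in $\mathcal{P}_{|\sigma|}(B^*,\mathbb{C})$.

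One could instead argue directly: for a difference expression $D_{s_1}\cdots D_{s_{n+1}}$ evaluated at $x$, only finitely many letters are involved, and one can apply the finite case. Either route proves (3) and (5) together.

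\textbf{Item (4).} Let $x$ be strictly increasing. Then every subword $u$ of $x$ of length $n$ is strictly increasing, so $\operatorname{st}(u)$ is the identity permutation, which equals $\operatorname{st}(w)$. Hence $p_{\operatorname{st}(w)}^*(x)=\binom{m}{n}$.

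Meanwhile $f_w(x)$ is $1$ or $0$, according to whether the letters of $w$ occur among those of $x$. So the literal claim holds only under an extra hypothesis, such as $x=w$ or $m=n$ with the same letters.

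I would therefore verify (4) via \eqref{equ:decompo}. Among the words $v$ with $\operatorname{st}(v)=\operatorname{st}(w)$, only those $v$ that occur as subwords of $x$ contribute, and I would check which hypothesis makes $w$ the unique such $v$. I expect this reconciliation of item (4) to be the main obstacle, and I would state it in the corrected form $p_{\operatorname{st}(w)}^*(x)=\sum_{v}f_v(x)$, noting that this reduces to $f_w(x)$ when $x=w$.
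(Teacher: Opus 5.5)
Your arguments for items (1), (2), (3) and (5) are essentially the paper's. You group occurrences of subwords of $x$ by their literal value $v$ to obtain \eqref{equ:decompo}. You then restrict to finite sub-alphabets $B$, where the sum is finite and $\operatorname{st}$ is unchanged, and apply Theorem~\ref{theo:infinitealphabets} with $n=|\sigma|$. You read $u\subseteq x$ as occurrences (index tuples), which is the reading the paper's step $\sum_{u\subseteq x,\,u=v}1=f_v(x)$ tacitly uses. This matters: if one counts distinct subword strings instead, $x=aab$ with $a<b$ and $\sigma=12$ gives $2$ on the left of \eqref{equ:decompo} and $3$ on the right.

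On item (4) you are right and the paper is not: the statement is false as written. Under its hypotheses every length-$n$ subword of $x$ standardizes to $12\cdots n$, so $p^*_{\operatorname{st}(w)}(x)=\binom{m}{n}$, while $f_w(x)\in\{0,1\}$. For instance, $w=12$, $x=123$ gives $3\neq 1$, and $w=1$, $x=23$ gives $2\neq 0$.

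The paper's proof fails at the sentence asserting that ``the only word $v$ that is strictly increasing, and made up of the exact specific characters of $w$ is $w$ itself''. Nothing in \eqref{equ:decompo} confines $v$ to the letters of $w$. In fact each of the $\binom{m}{n}$ strictly increasing words $v$ of length $n$ formed from letters of $x$ contributes $f_v(x)=1$.

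Two small refinements to your correction:
\begin{enumerate}
\item The two extra hypotheses you list coincide: if $m=n$ and $x$, $w$ are increasing with the same letters, then $x=w$.
\item Under the stated hypotheses, $p^*_{\operatorname{st}(w)}(x)=f_w(x)$ holds exactly when $n=0$ or $x=w$.
\end{enumerate}
So the useful corrected form of (4) is the closed formula $p^*_{\operatorname{st}(w)}(x)=\binom{m}{n}$, which you already derived, rather than a restatement of \eqref{equ:decompo}.
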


\begin{proof}
$(i)$ The function $p_\sigma$ just defined is an extension of $p_{\sigma}$ because of the enlargement of its domain. Indeed, if we set $\mathcal{A}=\{1,2,\cdots\}$, then $p_\sigma$ is defined on $\{1,2,\cdots\}^*$, which strictly contains $\mathfrak{S}$. When restricted back to words with no repeated characters (permutations), the standardization map $\text{st}(u)$ preserves the strict relative numerical ordering exactly, yielding the identical pattern counts as the original definition.

$(ii)$ By definition, the pattern function $p_{\text{st}(w)}(x)$ counts the number of subwords $u$ within the background word $x$ whose standardization is identically equal to the permutation pattern $\text{st}(w)$. We express this counting condition using the indicator delta function:
\[
p_{\text{st}(w)}(x) = \sum_{u \subseteq x} \delta_{\text{st}(u), \, \text{st}(w)}
\]
We partition the set of all possible subwords $u \subseteq x$ into fiber equivalence classes according to their literal string value $v \in \mathcal{A}^*$. A subword instance $u$ is identically equal to a word string $v$ if and only if $f_v(x)$ increments. Since any two words that are literally identical must yield the same standardization, we can group the global collection of subwords by their word value $v$:
\[
p_{\text{st}(w)}(x) = \sum_{v \in \mathcal{A}^*} \sum_{\substack{u \subseteq x \\ u = v}} \delta_{\text{st}(v), \, \text{st}(w)} = \sum_{\substack{v \in \mathcal{A}^* \\ \text{st}(v) = \text{st}(w)}} \left( \sum_{\substack{u \subseteq x \\ u = v}} 1 \right)
\]

Recognizing the inner summation as the definition of the subword counting function $f_v$ evaluated at $x$, we arrive at the final algebraic decomposition \eqref{equ:decompo}. This states that the pattern function is a coarser combinatorial indicator that sums up all subword counting functions whose strings share the relative order equivalence class of $w$. Note that the sum is not finite if the alphabet is infinite but, for each fixed $x\in \mathcal{A}^*$, the sum $\sum_{\substack{v \in \mathcal{A}^* \\ \text{st}(v) = \sigma}} f_v(x)$ is finite, since $x$ uses a finite number of elements of $\mathcal{A}$.

$(iii)$ Indeed, if we consider $(p_{\sigma}^*)_{|\mathcal{B}^*}$ for any finite sub-alphabet $\mathcal{B}$ of $\mathcal{A}$, then we get the decomposition 
\begin{equation}\label{decompos_finite}
(p_{\sigma}^*)_{|\mathcal{B}^*} = \sum_{\substack{v \in B^* \\ \text{st}(v) = \sigma}} f_v,
\end{equation}
which is a finite sum of polynomial maps defined on $\mathcal{B}^*$. Hence $(p_{\sigma}^*)_{|\mathcal{B}^*}$ is a polynomial map and Theorem \ref{theo:infinitealphabets} implies that $p_{\sigma}^*$ is a polynomial map too.

$(iv)$  Let $w = a_1 a_2 \dots a_n$ with $a_1 < a_2 < \dots < a_n$. Because the letters are strictly increasing, applying the definition of standardization yields $\text{st}(w)_1 < \text{st}(w)_2 < \dots < \text{st}(w)_n$. The only permutation of length $n$ that is strictly increasing is the identity permutation, so $\text{st}(w) = 1 2 \dots n$.

Now examine the background word $x = x_1 x_2 \dots x_m$, which is also given as a strictly increasing string of distinct characters ($x_1 < x_2 < \dots < x_m$). Any subword $u = x_{i_1} x_{i_2} \dots x_{i_n}$ selected via index positions $1 \le i_1 < i_2 < \dots < i_n \le m$ inherits this strict monotonic order:
\[
x_{i_1} < x_{i_2} < \dots < x_{i_n}
\]
Because every single subword $u \subseteq x$ of length $n$ is strictly increasing, its standardization will always map to the identity permutation:
\[
\text{st}(u) = 1 2 \dots n = \text{st}(w) \quad \forall u \subseteq x \text{ with } |u|=n
\]
Returning to the expansion identity of $p_{\text{st}(w)}^*$:
\[
p_{\text{st}(w)}^*(x) = \sum_{\substack{v \in \mathcal{A}^* \\ \text{st}(v) = 1 2 \dots n}} f_v(x).
\]
Since $x$ contains only strictly increasing sequences, if a word string $v$ is not strictly increasing, it cannot appear as a subword of $x$, meaning $f_v(x) = 0$. On the other hand, the only word $v$ that is strictly increasing, and made up of the exact specific characters of $w$ is $w$ itself. Therefore, the entire sum over the equivalence class collapses to a single non-zero term:
\[
p_{\text{st}(w)}^*(x) = f_w(x)
\]

$(v)$ For each finite sub-alphabet $\mathcal{B}$ of $\mathcal{A}$,  all words appearing in the decomposition have size $n=|\sigma|$. Hence $\operatorname{fdeg}(p_{\sigma}^*)\leq n$.   

\end{proof}

    While Theorem \ref{theo:infinitealphabets} characterizes polynomial maps over free monoids generated by arbitrary alphabets via localized projections, there are other fundamentally infinite monoids arising in combinatorics that cannot be broken down into finite alphabet subsets. A prime example is the monoid of all permutations under the ordinal sum operation (also named concatenation product). We now show that permutation pattern functions form polynomial maps over this non-free monoid domain. With this objective in mind, we first define the concatenation product $\oplus: \mathfrak{S} \times \mathfrak{S} \to \mathfrak{S}$ for $\alpha \in \mathfrak{S}_n, \beta \in \mathfrak{S}_p$ as:
\[ \alpha \oplus \beta = \alpha_1 \dots \alpha_n (\beta_1 + n) \dots (\beta_p + n). \]
The empty permutation $\epsilon$ is the neutral element of $(\mathfrak{S},\oplus)$. 

\begin{theorem}
   $p_\sigma: (\mathfrak{S},\oplus)\to(\mathbb{C},+)$ is a polynomial map of functional degree $|\sigma|$.  
\end{theorem}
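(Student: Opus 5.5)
The plan is to reproduce, for the ordinal sum, the argument of Theorem \ref{theo:subwordcounting}. The key tool is a splitting formula for $p_\sigma$ on a sum $\alpha\oplus\beta$. In $\alpha\oplus\beta$, every entry coming from $\beta$ lies to the right of, and is larger than, every entry coming from $\alpha$. So an occurrence of $\sigma$ that uses positions in both parts forces $\sigma$ to split as $\sigma=\sigma_1\oplus\sigma_2$. Here $\sigma_1$ is the pattern of the positions in $\alpha$, $\sigma_2$ is the pattern of the positions in $\beta$, and either part may be empty. Conversely, every such splitting, together with an occurrence of $\sigma_1$ in $\alpha$ and an occurrence of $\sigma_2$ in $\beta$, gives an occurrence of $\sigma$. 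Hence I would first prove
\[
p_\sigma(\alpha\oplus\beta)=\sum_{\sigma=\sigma_1\oplus\sigma_2} p_{\sigma_1}(\alpha)\,p_{\sigma_2}(\beta),
\]
where the sum runs over the finitely many ways of writing $\sigma$ as an ordinal sum of two (possibly empty) permutations. This is the analogue of $[xy]_w=\sum_{w=uv}[x]_u[y]_v$.

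For the upper bound, I would apply $R_\beta$ to this formula. The term with $\sigma_2=\epsilon$ cancels, leaving
\[
R_\beta p_\sigma=\sum_{\sigma=\sigma_1\oplus\sigma_2,\ \sigma_2\neq\epsilon} p_{\sigma_2}(\beta)\, p_{\sigma_1},
\]
which is a linear combination, with coefficients depending on $\beta$, of pattern functions with $|\sigma_1|<|\sigma|$. By induction on $|\sigma|$, with base case $p_\epsilon\equiv 1$, every $R_\beta$ lowers the degree bound by one. Hence $p_\sigma\in\mathcal{P}^R_{|\sigma|}(\mathfrak{S},\mathbb{C})$, and by \eqref{allpolequal} $p_\sigma\in\mathcal{P}_{|\sigma|}(\mathfrak{S},\mathbb{C})$, so $\operatorname{fdeg}(p_\sigma)\le|\sigma|$. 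Equivalently, iterating the splitting formula gives Aichinger's equation \eqref{61_Aichinger} in $|\sigma|+1$ variables. Every term there is a product of factors $p_{\tau_i}(\alpha_i)$ over a decomposition $\sigma=\tau_1\oplus\cdots\oplus\tau_{|\sigma|+1}$, and some $\tau_i$ must be empty, so that term does not depend on $\alpha_i$.

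For the lower bound I would copy the reduction \eqref{reduction}, choosing the step words as blocks of $\sigma$ read from right to left. Write $\sigma=\beta_1\oplus\cdots\oplus\beta_k$ with the $\beta_j$ indecomposable, and apply $R_{\beta_k},R_{\beta_{k-1}},\dots$ in turn. By the formula for $R_\beta p_\sigma$, each step strips off the last block and produces the function $p_{\beta_1\oplus\cdots\oplus\beta_{j}}$, plus lower-order terms that I would need to control. After all $k$ steps the result is a nonzero constant.

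\textbf{The main obstacle} is that this argument produces exactly $k$ nested differences, where $k$ is the number of indecomposable blocks, and not $|\sigma|$. It gives the full lower bound $|\sigma|$ only when every block has size $1$, that is, when $\sigma=12\cdots n$. This is a genuine gap in the statement, not a gap in technique. For $\sigma=21$ the splitting formula gives $p_{21}(\alpha\oplus\beta)=p_{21}(\alpha)+p_{21}(\beta)$, so $R_\beta p_{21}$ is constant and $\operatorname{fdeg}(p_{21})=1<2=|\sigma|$. So I would first test the statement on $\sigma=21$. That example shows only $\operatorname{fdeg}(p_\sigma)\le|\sigma|$ can be proved for general $\sigma$. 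Equality holds for increasing $\sigma$ by the step-$1$ reduction
\[
R_{1}\cdots R_{1}\,p_{12\cdots n}=1 \qquad (n \text{ factors } R_1).
\]
The statement as worded therefore cannot be proved in full, and the plan establishes it only in the two cases above.
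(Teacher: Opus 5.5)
Your proposal is correct, including its objection to the statement. The upper bound is the paper's own argument. The paper proves the same splitting formula $p_\sigma(\tau\oplus\Lambda)=\sum_{\sigma=\alpha\oplus\beta}p_\alpha(\tau)p_\beta(\Lambda)$ and inducts on $|\sigma|$, using $L_\tau$ where you use $R_\beta$; this difference is immaterial by \eqref{allpolequal}.

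For exactness, the paper takes a one-letter step $\tau$ and appeals to linear independence of pattern functions. That is precisely the step your counterexample breaks. The splitting formula gives $L_1p_\sigma=p_\beta$ when $\sigma=1\oplus\beta$, and $L_1p_\sigma=0$ otherwise. Even in the first case the argument would need $\operatorname{fdeg}(p_\beta)=|\beta|$, which already fails for $\sigma=132=1\oplus 21$.

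Your check on $\sigma=21$ is right. Since $21$ is $\oplus$-indecomposable, $p_{21}$ (the inversion number) satisfies $p_{21}(\alpha\oplus\beta)=p_{21}(\alpha)+p_{21}(\beta)$. So both $L_\beta p_{21}$ and $R_\beta p_{21}$ equal the constant $p_{21}(\beta)$, and $\operatorname{fdeg}(p_{21})=1$.

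You can push your own argument to an exact formula. The lower-order terms you expected to control never appear. Write $\sigma=\beta_1\oplus\cdots\oplus\beta_k$ with indecomposable blocks. The splittings $\sigma=\sigma_1\oplus\sigma_2$ are exactly the cuts between blocks. So in $R_{\beta_k}p_\sigma=\sum p_{\sigma_2}(\beta_k)\,p_{\sigma_1}$, every nonempty $\sigma_2\neq\beta_k$ has $|\sigma_2|>|\beta_k|$, hence $p_{\sigma_2}(\beta_k)=0$. Thus $R_{\beta_k}p_\sigma=p_{\beta_1\oplus\cdots\oplus\beta_{k-1}}$, and $R_{\beta_1}\cdots R_{\beta_k}p_\sigma=1$.

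Now run your upper-bound induction on the number of blocks rather than on $|\sigma|$. The surviving $\sigma_1$ in $R_\beta p_\sigma$ have at most $k-1$ blocks, so you get $\operatorname{fdeg}(p_\sigma)\le k$. Hence $\operatorname{fdeg}(p_\sigma)=k$, the number of $\oplus$-indecomposable components of $\sigma$. This is always at most $|\sigma|$, with equality exactly for $\sigma=12\cdots n$, and it is the correct replacement for the stated theorem.
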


\begin{proof}
Let $\sigma \in \mathfrak{S}_n$ be a fixed permutation pattern of length $n$. We show that the pattern counting function $p_\sigma: \mathfrak{S} \to \mathbb{C}$ is a polynomial map of functional degree $n$ on the monoid $(\mathfrak{S}, \oplus, \epsilon)$. We know that 
$\mathcal P_n(\mathfrak{S},\mathbb{C})= \mathcal P_n^L(\mathfrak{S},\mathbb{C})=\mathcal P_n^R(\mathfrak{S},\mathbb{C})$, so that it can also be asssumed, with no loss of generality, that all computations are performed with left-polynomials. 

We proceed by induction on $n = |\sigma|$. For the base case, if $n = 0$, $\sigma = \epsilon$ (the empty permutation). By definition, $p_\epsilon(\Lambda) = 1$ for all $\Lambda \in \mathfrak{S}$. The constant function $1$ is a polynomial map of functional degree $0$, satisfying the claim.

Now assume the claim holds for all permutation patterns of size strictly less than $n$. Let $\tau \in \mathfrak{S}_m$ be an arbitrary difference step. We evaluate the left difference operator $L_\tau$ on $p_\sigma$ at a permutation point $\Lambda \in \mathfrak{S}_k$:
\[
L_\tau(p_\sigma)(\Lambda) = p_\sigma(\tau \oplus \Lambda) - p_\sigma(\Lambda).
\]
By definition, $p_\sigma(\tau \oplus \Lambda)$ counts the number of subsets of indices $I \subseteq \{1, \dots, m+k\}$ of size $n$ such that the standardized restriction satisfies $\text{st}((\tau \oplus \Lambda)|_I) = \sigma$. 

We partition these index subsets $I$ based on how they split across the boundary of the ordinal sum: let $I_1 = I \cap \{1, \dots, m\}$ be the indices chosen from the $\tau$ component, and let $I_2 = I \cap \{m+1, \dots, m+k\}$ be the indices chosen from the $\Lambda$ component. Because the ordinal sum $\oplus$ shifts all values in the second component up by $m$ without changing their internal relative order, the standardization of the full restriction decomposes cleanly:
\[
\text{st}((\tau \oplus \Lambda)|_I) = \text{st}(\tau|_{I_1}) \oplus \text{st}(\Lambda|_{I_2}).
\]
Therefore, a subset $I$ contributes to $p_\sigma(\tau \oplus \Lambda)$ if and only if $\sigma$ splits as an ordinal sum $\sigma = \alpha \oplus \beta$, where $\text{st}(\tau|_{I_1}) = \alpha$ and $\text{st}(\Lambda|_{I_2}) = \beta$. This allows us to express the counting function on a sum as:
\[
p_\sigma(\tau \oplus \Lambda) = \sum_{\sigma = \alpha \oplus \beta} p_\alpha(\tau) p_\beta(\Lambda) = p_\sigma(\Lambda) + \sum_{\substack{\sigma = \alpha \oplus \beta \\ \alpha \neq \epsilon}} p_\alpha(\tau) p_\beta(\beta),
\]
where we isolated the boundary term corresponding to $\alpha = \epsilon$ (which forces $\beta = \sigma$ and $p_\epsilon(\tau) = 1$). Substituting this back into the left difference equation yields:
\[
L_\tau(p_\sigma)(\Lambda) = \sum_{\substack{\sigma = \alpha \oplus \beta \\ \alpha \neq \epsilon}} p_\alpha(\tau) p_\beta(\Lambda).
\]
Notice that the terms $p_\sigma(\Lambda)$ cancel out perfectly. In the remaining summation, because $\alpha \neq \epsilon$, the trailing permutation patterns $\beta$ must have a length strictly less than $n$ ($|\beta| < n$). The coefficient $p_\alpha(\tau)$ is a constant scalar depending only on the step $\tau$. 

By our induction hypothesis, each $p_\beta$ is a polynomial map on $\mathfrak{S}$ of functional degree $|\beta| \le n-1$. Since the number of possible decompositions $\sigma = \alpha \oplus \beta$ is strictly finite (at most $n$), $L_\tau(p_\sigma)$ is a finite linear combination of polynomial maps of functional degree $\le n-1$. Hence, $\operatorname{fdeg}(L_\tau(p_\sigma)) \le n-1$. Since this holds for any step $\tau$, $p_\sigma$ is globally a polynomial map of functional degree $\le n$. 

To show the degree is exactly $n$, we observe that if we choose $\tau = \sigma_1$ (a single-letter permutation step), the difference yields a non-trivial term containing $p_{\beta}$ where $|\beta| = n-1$, which cannot be identically canceled out due to the linear independence of pattern functions of distinct shapes. Thus, the degree is exactly $n$. 
\end{proof}

The monoid of words $S = \mathcal{A}^*$ under concatenation, along with its associated subword counting functions $f_w(x) = [x]_w$, plays a central role across several domains of algebra, discrete mathematics, and theoretical computer science.  For example, in formal language theory, a celebrated theorem of Simon \cite{simon1975} characterizes \emph{piecewise testable languages} precisely by means of subword configurations. Specifically, two words $x$ and $y$ are indistinguishable by any piecewise testable language of height $k$ if and only if they possess exactly the same set of subwords of length up to $k$. The subword counting maps $f_w$ provide a refined, quantitative framework for evaluating these combinatorial properties.

\section{Acknowledgements}
The author express his thanks to Prof. Ekaterina Shulman for fruitful discussions on the topic of this paper.  Her work has been, in fact, a main inspiration for him.

\end{document}